\theoremstyle{proclaim}
\newtheorem{theorem}{Theorem}[section]
\newtheorem{lemma}[theorem]{Lemma}
\newtheorem{corollary}[theorem]{Corollary}
\newtheorem{proposition}[theorem]{Proposition}
\theoremstyle{fancyproclaim}
\theoremstyle{statement}
\newtheorem{remark}[theorem]{Remark}
\newtheorem{definition}[theorem]{Definition}
\theoremstyle{fancystatement}
\numberwithin{equation}{section}
\providecommand{\AMS}{$\mathcal{A}$\kern-.1667em%
	\lower.25em\hbox{$\mathcal{M}$}\kern-.125em$\mathcal{S}$}
\begin{document}
	\title[Characterization  of extreme contractions  ]{Characterization  of extreme contractions through $k-$smoothness of operators   }
	\author[Arpita Mal, Kallol Paul and Subhrajit Dey]{Arpita Mal, Kallol Paul and Subhrajit Dey}

	\newcommand{\acr}{\newline\indent}

	\address[Mal]{Department of Mathematics\\ Jadavpur University\\ Kolkata 700032\\ West Bengal\\ INDIA}
	\email{arpitamalju@gmail.com}
	
	\address[Paul]{Department of Mathematics\\ Jadavpur University\\ Kolkata 700032\\ West Bengal\\ INDIA}
	\email{kalloldada@gmail.com}
	
	\address[Dey]{Department of Mathematics\\ Muralidhar Girls' College\\ Kolkata 700029\\ West Bengal\\ INDIA}
	\email{subhrajitdeyjumath@gmail.com}

	\thanks{The research of  Arpita Mal is supported by UGC, Govt. of India.  The research of Prof. Paul  is supported by project MATRICS(MTR/2017/000059)  of DST, Govt. of India. } 
	
	\subjclass[2010]{Primary 46B20, Secondary 47L05}
	\keywords{Extreme contraction; $k-$smoothness; Linear operators; Polyhedral Banach space, Weak L-P property}

\maketitle
\begin{abstract}
  We characrterize extreme contractions defined between  \ finite-dimensional polyhedral Banach spaces using  $k$- smoothness of  operators. We also explore weak L-P property, a recently introduced concept in the study of extreme contractions. We obtain a sufficient condition for a pair of finite-dimensional polyhedral Banach spaces to satisfy weak L-P property. As an application of these results, we explicitly compute the number of extreme contractions in some special Banach spaces. Our approach in this paper in studying extreme contractions lead to the improvement and generalization of  previously known results. 
\end{abstract}

\section{Introduction}
The study of extreme contractions and smoothness of operators between Banach spaces are two classical and fertile  areas of research in Banach space theory. While the characterization of extreme contractions defined between Hilbert spaces is well known \cite{G1,K,N,S},  the characterization of the same is still elusive, in the general setting of Banach spaces.  There are several papers including \cite{BR,CM,CM2,G,I,Ki,L1,L,LP,RRBS,SRP,Sh,S1},  that deal with the study of extreme contractions of operators defined between some special Banach spaces. In these papers, the authors have studied extreme contractions defined between particular Banach spaces. Recently in \cite{SPM}, the authors have characterized extreme contractions defined between two-dimensional strictly convex, smooth Banach spaces. From all these research works, it is clear that the characterization of extreme contractions depends heavily on the geometry of domain and range spaces. The purpose of this paper is to study extreme contractions between polyhedral Banach spaces and explore some interesting connections between the order of smoothness of an operator and extreme contraction.  Before proceeding further, we first establish the notations and terminologies.

We denote the Banach spaces by the letters $\mathbb{X}$ and $\mathbb{Y}.$ Throughout the paper, we assume that the Banach spaces are real.  $|A|$ denotes the cardinality of a set $A.$  An element $x$ of a convex set $A$ is said to be an extreme point of $A,$ if $x=ty+(1-t)z$ for some $t\in (0,1)$ and $y,z\in A$ implies that $x=y=z.$ The set of all extreme points of a convex set $A$ is denoted by $Ext(A).$ The unit ball and the unit sphere of $\mathbb{X}$ are denoted by $B_\mathbb{X}$ and $S_\mathbb{X}$ respectively, that is, $B_\mathbb{X}=\{x\in \mathbb{X}:\|x\|\leq 1\}$ and $S_\mathbb{X}=\{x\in \mathbb{X}:\|x\|=1\}.$ $\mathbb{L}(\mathbb{X},\mathbb{Y})$ denotes the space of all bounded linear operators defined from $\mathbb{X}$ to $\mathbb{Y}.$ $M_T$ denotes the set of all unit vectors at which $T$ attain its norm, that is, $M_T=\{x\in S_\mathbb{X}:\|Tx\|=\|T\|\}.$ For $x_1,x_2\in \mathbb{X},$ $L[x_1,x_2],L(x_1,x_2)$ and $L[x_1,x_2[$ represents the following sets:
$$L[x_1,x_2]=\{tx_1+(1-t)x_2:0\leq t\leq 1\},$$
$$L(x_1,x_2)=\{tx_1+(1-t)x_2:0< t< 1\}~\text{and}$$
$$L[x_1,x_2[=L[x_1,x_2]\cup \{tx_2-(t-1)x_1: t> 1\}.$$
$\mathbb{X}^*$ denotes the dual space of $\mathbb{X}.$ A bounded linear functional $x^*\in S_{\mathbb{X}^*}$ is said to be a supporting linear functional of a non-zero vector $x\in \mathbb{X},$ if $x^*(x)=\|x\|.$ For $x\in S_\mathbb{X},$ the set of all supporting linear functionals  of $x$ is denoted by $J(x),$ that is, $J(x)=\{x^*\in S_{\mathbb{X}^*}:x^*(x)=1\}.$ Note that, $J(x)$ is a non-empty, weak*-compact, convex subset of $S_{\mathbb{X}^*}.$ $x\in S_\mathbb{X}$ is said to be a smooth point if $J(x)$ is singleton. $x\in S_\mathbb{X}$ is said to be a $k-$smooth point \cite{KS} or the order of smoothness of $x$ is said to be $k$, if $J(x)$ contains exactly $k$ linearly independent functionals, that is, $k=\dim~span~J(x).$ From \cite[Prop. 2.1]{LR}, we get, if $x$ is $k-$smooth, then $k=\dim~span~Ext~J(x).$ Likewise an operator $T\in S_{\mathbb{L}(\mathbb{X},\mathbb{Y})}$ is said to be $k-$smooth operator, if $k=\dim~span~J(T)=\dim~span~Ext~ J(T).$  For more information on $k-$smoothness in Banach space, the readers may go through \cite{H,Ha,KS,LR,MP,W}. An operator $T\in \mathbb{L}(\mathbb{X},\mathbb{Y})$ is said to be an extreme contraction, if $T$ is an extreme point of the unit ball of $\mathbb{L}(\mathbb{X},\mathbb{Y}).$ Observe that, if $T$ is an extreme contraction, then $\|T\|=1.$ 
 Recall that, a finite-dimensional Banach space is said to be a polyhedral Banach space, if the unit ball contains only finitely many extreme points. Equivalently, a finite-dimensional Banach space is said to be polyhedral if $B_\mathbb{X}$ is a polyhedron.  In particular, a two-dimensional polyhedral Banach space is said to be a polygonal Banach space. Let us recall the following two definitions related to polyhedral Banach spaces.
\begin{definition}
	A polyhedron $P$ is a non-empty compact subset of $\mathbb{X}$ which is the intersection of finitely many closed half-spaces of $\mathbb{X},$ that is, $P=\cap_{i=1}^rM_i,$ where $M_i$ are closed half-spaces in $\mathbb{X}$ and $r\in \mathbb{N}.$ The dimension $dim(P)$ of the polyhedron $P$ is defined as the dimension of the subspace generated by the differences $v-w$ of vectors $v,w\in P.$
\end{definition}

\begin{definition}
	A polyhedron $Q$ is said to be a face of the polyhedron $P$ if either $Q=P$ or if we can write $Q=P\cap \delta M,$ where $M$ is a closed half-space in $\mathbb{X}$ containing $P$ and $\delta M$ denotes the boundary of $M.$ If $dim(Q)=i,$ then $Q$ is called an $i-$face of $P$. $(n-1)-$faces of $P$ are called facets of $P$ and $1-$faces of $P$ are called edges of $P.$
\end{definition}       

Motivated by the work of Lindenstrauss and Perles in \cite{LP}, the following two definitions have been introduced recently in \cite{RRBS,SRP}, to study  extreme contractions.

\begin{definition}\cite{SRP}
	Let $\mathbb{X},\mathbb{Y}$ be Banach spaces. We say that the pair $(\mathbb{X},\mathbb{Y})$ has L-P (abbreviated from Lindenstrauss-Perles) property if $T\in S_{\mathbb{L}(\mathbb{X},\mathbb{Y})}$ is an extreme contraction if and only if $T(Ext(B_\mathbb{X}))\subseteq Ext(B_\mathbb{Y}).$
\end{definition}

\begin{definition}\cite{RRBS}
	Let $\mathbb{X},\mathbb{Y}$ be Banach spaces. We say that the pair $(\mathbb{X},\mathbb{Y})$ has weak L-P  property if for each extreme contraction $T\in S_{\mathbb{L}(\mathbb{X},\mathbb{Y})},$ $T(Ext(B_\mathbb{X}))\cap Ext(B_\mathbb{Y})\neq \emptyset.$
\end{definition}

In this paper, we first obtain a characterization of extreme contractions defined between finite-dimensional polyhedral Banach spaces in terms of $k-$smoothness of the operators. As an immediate application of this result, we characterize the extreme contractions defined between two-dimensional polygonal Banach spaces. Next, we obtain a sufficient condition for a pair of finite-dimensional polyhedral Banach spaces to satisfy weak L-P property. This result generalizes \cite[Th. 2.1]{RRBS} and also improves on \cite[Th. 2.2]{RRBS}. Then we show that the sufficient condition for a pair $(\mathbb{X},\mathbb{Y})$ to satisfy weak L-P property, given in  Theorem \ref{th-weaklp} of this paper, is also a necessary condition, if $\mathbb{X}$ is two-dimensional polygonal Banach space and $\mathbb{Y}=\ell_{\infty}^2.$ However, by exhibiting proper examples, we show that this is not true in general. As a final result of this paper, we explicitly compute the exact number of extreme contractions defined on $\mathbb{X},$ where $S_\mathbb{X}$ is a regular hexagon. All the results obtained here, highlight the pivotal role of the order of smoothness of an operator in the study of extreme contractions defined between finite-dimensional polyhedral Banach spaces.

We will use \cite[Lemma 3.1]{Wa} in describing the structure of $Ext~J(T)$. For simplicity we state the lemma for finite-dimensional Banach spaces.

\begin{lemma}\cite[Lemma 3.1]{Wa}\label{lemma-wojcik}
	Suppose that $\mathbb{X},\mathbb{Y}$ are finite-dimensional Banach spaces. Let $T\in S_{\mathbb{L}(\mathbb{X},\mathbb{Y})}.$  Then $M_T\cap Ext(B_\mathbb{X})\neq \emptyset$ and 
	$$Ext ~J(T)=\{y^*\otimes x\in \mathbb{L}(\mathbb{X},\mathbb{Y})^*:x\in M_T\cap Ext(B_{\mathbb{X}}), y^*\in Ext ~J(Tx)\},$$
	where  $y^*\otimes x: \mathbb{L}(\mathbb{X},\mathbb{Y})\to \mathbb{R}$ is defined by $y^*\otimes x(S)=y^*(Sx)$ for every $S\in \mathbb{L}(\mathbb{X},\mathbb{Y}).$
\end{lemma}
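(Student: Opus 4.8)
The plan is to derive the statement from a single structural fact about the dual ball of $\mathbb{L}(\mathbb{X},\mathbb{Y})$, namely
\begin{equation*}
Ext\big(B_{\mathbb{L}(\mathbb{X},\mathbb{Y})^*}\big)=\{y^*\otimes x : x\in Ext(B_\mathbb{X}),\ y^*\in Ext(B_{\mathbb{Y}^*})\},
\tag{$\star$}
\end{equation*}
together with the observation that $J(T)$ is a face of $B_{\mathbb{L}(\mathbb{X},\mathbb{Y})^*}$. First, for the nonemptiness assertion I would note that $x\mapsto\|Tx\|$ is a continuous convex function on the compact convex set $B_\mathbb{X}$, so by the Bauer maximum principle it attains its maximum, which equals $\|T\|=1$, at some $x_0\in Ext(B_\mathbb{X})$; since $Ext(B_\mathbb{X})\subseteq S_\mathbb{X}$ and $\|Tx_0\|=1=\|T\|$, this gives $x_0\in M_T\cap Ext(B_\mathbb{X})$. (This also follows a posteriori from the displayed equality, but the direct argument is cleaner.)

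To prove the inclusion $\subseteq$ in $(\star)$, I would embed $\mathbb{L}(\mathbb{X},\mathbb{Y})$ isometrically into $C(K)$ with $K=S_\mathbb{X}\times S_{\mathbb{Y}^*}$ via $T\mapsto\big((x,y^*)\mapsto y^*(Tx)\big)$, which is isometric because $\|T\|=\sup\{y^*(Tx):x\in S_\mathbb{X},\,y^*\in S_{\mathbb{Y}^*}\}$. Passing to adjoints, $B_{\mathbb{L}(\mathbb{X},\mathbb{Y})^*}$ is the image of $B_{C(K)^*}$ under a weak${}^*$-continuous affine surjection $\Phi^*$ sending $\pm\delta_{(x,y^*)}$ to $\pm\,y^*\otimes x$; since $Ext(B_{C(K)^*})=\{\pm\delta_k:k\in K\}$ and, for a continuous affine image of a compact convex set, every extreme point of the image is the image of an extreme point, each element of $Ext(B_{\mathbb{L}(\mathbb{X},\mathbb{Y})^*})$ has the form $y^*\otimes x$ with $x\in S_\mathbb{X}$, $y^*\in S_{\mathbb{Y}^*}$. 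A short separate argument then forces $x,y^*$ to be extreme: if $x=\tfrac12(x_1+x_2)$ with $x_i\in B_\mathbb{X}$, then $y^*\otimes x=\tfrac12(y^*\otimes x_1+y^*\otimes x_2)$ with $y^*\otimes x_i\in B_{\mathbb{L}(\mathbb{X},\mathbb{Y})^*}$, so extremality gives $y^*\otimes x_i=y^*\otimes x$, and picking $S\in\mathbb{L}(\mathbb{X},\mathbb{Y})$ with $y^*(S(x_i-x))\neq 0$ (possible whenever $x_i\neq x$, since $y^*\neq 0$) yields $x_1=x_2=x$; symmetrically, using $\{Sx:S\in\mathbb{L}(\mathbb{X},\mathbb{Y})\}=\mathbb{Y}$, one gets $y^*\in Ext(B_{\mathbb{Y}^*})$. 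Equivalently, one may invoke the standard description of the extreme points of the unit ball of the space of nuclear operators $\mathbb{Y}\to\mathbb{X}$, which is $\mathbb{L}(\mathbb{X},\mathbb{Y})^*$.

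The reverse inclusion $\supseteq$ in $(\star)$ is the step I expect to be the main obstacle, and it is where the polyhedral (and finite-dimensional) hypothesis is genuinely used. Fix $x_0\in Ext(B_\mathbb{X})$, $y_0^*\in Ext(B_{\mathbb{Y}^*})$ and suppose $y_0^*\otimes x_0=\tfrac12(\phi+\psi)$ with $\phi,\psi\in B_{\mathbb{L}(\mathbb{X},\mathbb{Y})^*}$. For each $y\in\mathbb{Y}$ the map $x^*\mapsto\phi(x^*\otimes y)$ is a linear functional on $\mathbb{X}^*$ of norm $\le\|y\|$, hence by reflexivity is evaluation at a vector $a_y^\phi\in\mathbb{X}$ with $\|a_y^\phi\|\le\|y\|$; likewise $a_y^\psi$, and $\tfrac12(a_y^\phi+a_y^\psi)=y_0^*(y)x_0$. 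When $y$ norms $y_0^*$, i.e. $y_0^*(y)=\|y\|>0$, dividing by $y_0^*(y)$ exhibits $x_0$ as the midpoint of two vectors of $B_\mathbb{X}$, so extremality of $x_0$ forces $a_y^\phi=a_y^\psi=y_0^*(y)x_0$; thus $\phi$ and $\psi$ agree with $y_0^*\otimes x_0$ on every operator $x^*\otimes y$ with $y$ norming $y_0^*$. Since $\mathbb{Y}$ is polyhedral and $y_0^*\in Ext(B_{\mathbb{Y}^*})$, the face of $B_\mathbb{Y}$ exposed by $y_0^*$ is a facet, whose affine hull misses the origin and hence spans $\mathbb{Y}$; therefore the operators $x^*\otimes y$ with $y$ norming $y_0^*$ span $\mathbb{L}(\mathbb{X},\mathbb{Y})$, so $\phi=\psi=y_0^*\otimes x_0$, giving $y_0^*\otimes x_0\in Ext(B_{\mathbb{L}(\mathbb{X},\mathbb{Y})^*})$.

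Finally I would assemble the lemma. The set $J(T)=\{F\in B_{\mathbb{L}(\mathbb{X},\mathbb{Y})^*}:F(T)=1\}$ is the face of $B_{\mathbb{L}(\mathbb{X},\mathbb{Y})^*}$ exposed by $T$, so $Ext(J(T))=J(T)\cap Ext(B_{\mathbb{L}(\mathbb{X},\mathbb{Y})^*})$; combining with $(\star)$, an element of $Ext(J(T))$ is a $y^*\otimes x$ with $x\in Ext(B_\mathbb{X})$, $y^*\in Ext(B_{\mathbb{Y}^*})$ and $y^*(Tx)=1$. For such a pair, $1=y^*(Tx)\le\|Tx\|\le\|T\|=1$ gives $x\in M_T$ and $y^*\in J(Tx)$, and since $\|Tx\|=1$ the set $J(Tx)$ is a face of $B_{\mathbb{Y}^*}$, so $y^*\in Ext(J(Tx))$. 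Conversely, if $x\in M_T\cap Ext(B_\mathbb{X})$ and $y^*\in Ext(J(Tx))$, then again (since $\|Tx\|=1$ makes $J(Tx)$ a face of $B_{\mathbb{Y}^*}$) $y^*\in Ext(B_{\mathbb{Y}^*})$, so $y^*\otimes x\in Ext(B_{\mathbb{L}(\mathbb{X},\mathbb{Y})^*})$ by $(\star)$, while $(y^*\otimes x)(T)=y^*(Tx)=1$ places it in the face $J(T)$; hence $y^*\otimes x\in Ext(J(T))$. This gives the asserted equality.
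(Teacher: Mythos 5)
Most of what you write is sound, and it is worth noting at the outset that the paper itself offers no proof of this lemma --- it is imported verbatim from W\'ojcik \cite[Lemma 3.1]{Wa} --- so any complete argument is necessarily ``different from the paper's''. Your nonemptiness argument via the Bauer maximum principle is correct; the inclusion $Ext(B_{\mathbb{L}(\mathbb{X},\mathbb{Y})^*})\subseteq Ext(B_{\mathbb{Y}^*})\otimes Ext(B_{\mathbb{X}})$ via the isometric embedding into $C(S_\mathbb{X}\times S_{\mathbb{Y}^*})$ and the fact that extreme points of a continuous affine image of a compact convex set lift to extreme points is correct; and the final assembly, using that $J(T)$ and $J(Tx)$ are exposed faces so that $Ext~J(T)=J(T)\cap Ext(B_{\mathbb{L}(\mathbb{X},\mathbb{Y})^*})$ and $Ext~J(Tx)=J(Tx)\cap Ext(B_{\mathbb{Y}^*})$, is exactly right.

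The genuine gap is in your proof of the reverse inclusion $\supseteq$ of $(\star)$. You say this is ``where the polyhedral hypothesis is genuinely used'' --- but the lemma has no polyhedral hypothesis: it is stated, and later applied (e.g.\ in Theorem \ref{th-ksmooth}, where $\mathbb{Y}$ is an \emph{arbitrary} two-dimensional space), for general finite-dimensional Banach spaces. Your argument shows that $\phi$ and $\psi$ agree with $y_0^*\otimes x_0$ on all operators $x^*\otimes y$ with $y_0^*(y)=\|y\|$, and then needs the set of such $y$ to span $\mathbb{Y}$; that holds when $y_0^*$ norms a facet, i.e.\ in the polyhedral case, but fails in general. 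For $\mathbb{Y}=\ell_2^2$ every $y_0^*\in S_{\mathbb{Y}^*}$ is an extreme point of $B_{\mathbb{Y}^*}$ yet attains its norm only on the ray through a single unit vector $y_0$, so your argument pins down $\phi$ and $\psi$ only on the proper subspace $\{x^*\otimes y_0: x^*\in\mathbb{X}^*\}$ of $\mathbb{L}(\mathbb{X},\mathbb{Y})$ and cannot conclude $\phi=\psi$. The inclusion is nevertheless true in full generality --- it is exactly the Lima--Olsen description $Ext(B_{\mathbb{L}(\mathbb{X},\mathbb{Y})^*})=Ext(B_{\mathbb{Y}^*})\otimes Ext(B_{\mathbb{X}})$, which this paper invokes as \cite[Th.~1]{LO} in the proof of Theorem \ref{th-ec} --- but establishing it requires a genuinely harder argument than the one you give (or simply a citation of that theorem, which would be the cleanest repair).
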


\section{main results}

We begin this section with the characterization of exposed points of the unit ball of a finite-dimensional polyhedral Banach space, which clearly follows from 
\cite[Th. 3.5]{MP} and \cite[Th. 4.2]{W}.  Recall that an element $x \in S_{\mathbb{X}} $ is said to be an exposed point of the unit ball $ B_{\mathbb{X}},$ if there exists a supporting linear functional $ x^*$ of $x$ such that $x^*$ attains norm only at $\pm x.$ We also observe that   in  a finite-dimensional polyhedral Banach space, a point is an extreme point of the unit ball if and only if it is an exposed point of the same. We write these observations in the form of the following theorem. 
\begin{theorem}\label{th-ep}
	Let $\mathbb{X}$ be a polyhedral Banach space of dimension $n.$ Let $x\in S_{\mathbb{X}}.$ Then the following are equivalent:\\
	(a) $x$ is an exposed point of $B_\mathbb{X}.$\\
	(b) $x$ is an extreme point of $B_\mathbb{X}.$\\
	(c) $x$ is $n-$smooth. 
\end{theorem}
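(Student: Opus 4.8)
The plan is to prove the cycle of implications $(a)\Rightarrow(b)\Rightarrow(c)\Rightarrow(a)$, exploiting the finite-dimensional polyhedral structure throughout. The implication $(a)\Rightarrow(b)$ is the easy direction and holds in complete generality: if $x$ is exposed via a supporting functional $x^*$ that attains its norm only at $\pm x$, and $x=ty+(1-t)z$ with $t\in(0,1)$ and $y,z\in B_{\mathbb{X}}$, then $1=x^*(x)=tx^*(y)+(1-t)x^*(z)$ forces $x^*(y)=x^*(z)=1$, so $y,z\in\{+x\}$ (the value $-x$ being excluded since $x^*(-x)=-1$), hence $x=y=z$.

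For $(b)\Rightarrow(a)$, this is where polyhedrality is essential. Here I would argue that in a finite-dimensional polyhedral Banach space every extreme point of $B_{\mathbb{X}}$ is a vertex of the polytope $B_{\mathbb{X}}$, i.e. a $0$-face. Since $x$ is a vertex, it lies in a facet $F$ (an $(n-1)$-face) of $B_{\mathbb{X}}$; by the definition of face, $F=B_{\mathbb{X}}\cap\delta M$ for a closed half-space $M\supseteq B_{\mathbb{X}}$, and the bounding hyperplane $\delta M$ is a level set $\{z: x^*(z)=1\}$ for a suitable $x^*\in S_{\mathbb{X}^*}$. One then needs to produce a functional that exposes $x$ specifically: take all the facets $F_1,\dots,F_m$ containing $x$ with associated functionals $x_1^*,\dots,x_m^*$, and form a strictly convex combination $x^*=\sum\lambda_i x_i^*$. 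Because $x$ is a vertex, the intersection of the facets $F_i$ is exactly $\{x\}$, and a standard finite-polytope argument shows that $x^*$ attains $1$ on $B_{\mathbb{X}}$ precisely at $x$ (each $x_i^*\leq 1$ on $B_{\mathbb{X}}$ with equality only on $F_i$, so $x^*(z)=1$ forces $z\in\bigcap F_i=\{x\}$). Thus $x$ is exposed.

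For $(b)\Leftrightarrow(c)$, I would invoke the cited results \cite[Th. 3.5]{MP} and \cite[Th. 4.2]{W}, which already characterize $n$-smooth points in finite-dimensional polyhedral spaces. Concretely, $J(x)$ is the face of $B_{\mathbb{X}^*}$ dual to $x$; its extreme points are exactly the functionals $x_i^*$ coming from the facets of $B_{\mathbb{X}}$ through $x$ (together with the constraint structure of the polytope). The point $x$ is $n$-smooth precisely when $\dim\,\mathrm{span}\,\mathrm{Ext}\,J(x)=n$, i.e. when the normals $x_i^*$ of the facets through $x$ span all of $\mathbb{X}^*$, which is equivalent to $\bigcap_i \ker(x_i^*-1)=\{x\}$, i.e. to $x$ being a vertex, i.e. to $x$ being extreme. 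Alternatively one can cite these theorems as a black box, since the excerpt explicitly permits this.

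The main obstacle I anticipate is $(b)\Rightarrow(a)$: one must carefully use the polyhedral hypothesis to guarantee that the collection of facet-normals through an extreme point spans a full-dimensional cone whose ``center'' exposes the point, and in particular rule out the possibility of a one-dimensional or lower-dimensional ``pencil'' of supporting functionals all vanishing to the same order — something that can genuinely fail in non-polyhedral spaces (e.g. a point lying in the relative interior of an edge of a smooth portion of the sphere). The cleanest route is probably to reduce everything, via \cite[Th. 3.5]{MP} and \cite[Th. 4.2]{W}, to the statement that $x$ is $n$-smooth iff $x$ is a vertex of $B_{\mathbb{X}}$, and then note that in a polytope the vertices are exactly the extreme points; with that reduction the theorem becomes essentially a bookkeeping exercise about $\dim\,\mathrm{span}\,J(x)$.
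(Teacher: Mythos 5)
Your proposal is correct and follows essentially the same route as the paper, which offers no written proof at all but simply derives the theorem from \cite[Th. 3.5]{MP} and \cite[Th. 4.2]{W} together with the standard observation that in a finite-dimensional polyhedral space the extreme points of $B_{\mathbb{X}}$ coincide with the exposed points. Your explicit argument for $(b)\Rightarrow(a)$ via a strictly positive convex combination of the facet functionals through the vertex $x$ is a sound way of filling in the detail the paper leaves implicit.
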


In the next theorem, we prove a characterization of extreme contractions defined between finite-dimensional polyhedral Banach spaces in terms of $k-$smoothness.

\begin{theorem}\label{th-ec}
	Let $\mathbb{X},\mathbb{Y}$ be polyhedral Banach spaces such that $\dim(\mathbb{X})=n$ and $\dim(\mathbb{Y})=m.$ Then $T\in S_{\mathbb{L}(\mathbb{X},\mathbb{Y})}$ is an extreme contraction if and only if $T$ is $mn-$smooth.
\end{theorem}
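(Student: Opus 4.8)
The plan is to show that $\mathbb{L}(\mathbb{X},\mathbb{Y})$ is \emph{itself} a polyhedral Banach space, of dimension $\dim(\mathbb{X})\cdot\dim(\mathbb{Y})=mn$, and then to invoke the equivalence (b)$\Leftrightarrow$(c) of Theorem \ref{th-ep} with $\mathbb{L}(\mathbb{X},\mathbb{Y})$ playing the role of $\mathbb{X}$. Indeed, an extreme contraction is by definition an extreme point of $B_{\mathbb{L}(\mathbb{X},\mathbb{Y})}$, and in Theorem \ref{th-ep} the dimension $n$ gets replaced by $mn$, so condition (c) reads ``$T$ is $mn$-smooth''. Thus the theorem will follow at once, and the whole content reduces to one structural fact about $\mathbb{L}(\mathbb{X},\mathbb{Y})$.

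To prove that fact, I would first use that $\mathbb{X}$ is polyhedral to write $Ext(B_\mathbb{X})=\{x_1,\dots,x_p\}$, a finite set. Since $x\mapsto\|Tx\|$ is convex and $B_\mathbb{X}$ is the convex hull of $Ext(B_\mathbb{X})$, we have $\|T\|=\max_{1\le i\le p}\|Tx_i\|$, so $\|T\|\le 1$ if and only if $Tx_i\in B_\mathbb{Y}$ for every $i$. Writing $B_\mathbb{Y}=\bigcap_{j=1}^{s}\{y\in\mathbb{Y}:y_j^*(y)\le 1\}$ as a finite intersection of closed half-spaces (possible since $\mathbb{Y}$ is polyhedral, after normalizing the defining functionals), we obtain
\[
B_{\mathbb{L}(\mathbb{X},\mathbb{Y})}=\bigcap_{i=1}^{p}\bigcap_{j=1}^{s}\bigl\{T\in\mathbb{L}(\mathbb{X},\mathbb{Y}):y_j^*(Tx_i)\le 1\bigr\}.
\]
Each set on the right is a closed half-space of $\mathbb{L}(\mathbb{X},\mathbb{Y})$, since $T\mapsto y_j^*(Tx_i)$ is linear, and $B_{\mathbb{L}(\mathbb{X},\mathbb{Y})}$ is non-empty and compact, being a bounded closed subset of the finite-dimensional space $\mathbb{L}(\mathbb{X},\mathbb{Y})$. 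Hence it is a polyhedron, so $\mathbb{L}(\mathbb{X},\mathbb{Y})$ is a polyhedral Banach space of dimension $mn$, and Theorem \ref{th-ep} completes the argument.

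The only mildly delicate point — the main obstacle, such as it is — is precisely this reduction of the operator-norm constraint $\|T\|\le 1$ to finitely many linear inequalities; it hinges on the finiteness of $Ext(B_\mathbb{X})$ together with the half-space description of $B_\mathbb{Y}$, both supplied by the polyhedrality hypotheses. I would also note that only one implication genuinely uses polyhedrality: in any $d$-dimensional normed space $V$, if $J(v)$ spans $V^*$ then writing $v=\tfrac12(a+b)$ with $a,b\in B_V$ forces $f(a)=f(b)=f(v)$ for every $f\in J(v)$, hence for every $f\in V^*$, so $a=b=v$; thus ``$mn$-smooth $\Rightarrow$ extreme contraction'' holds in general. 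For the reverse implication one could, if one preferred not to appeal to polyhedrality of $\mathbb{L}(\mathbb{X},\mathbb{Y})$, instead use Lemma \ref{lemma-wojcik} to describe $Ext~J(T)$ and perturb $T$ along an operator annihilated by $J(T)$, using that $B_\mathbb{Y}$ is locally a polytope near each $Tx_i$; the route through Theorem \ref{th-ep} is simply shorter.
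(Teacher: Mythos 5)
Your proposal is correct, and it follows the same overall skeleton as the paper: identify $\mathbb{L}(\mathbb{X},\mathbb{Y})$ as an $mn$-dimensional polyhedral Banach space and then apply the equivalence of extreme points and $mn$-smooth points from Theorem \ref{th-ep}. The one place you diverge is in how polyhedrality of the operator space is established. The paper quotes Lima--Olsen \cite{LO} to get $Ext(B_{\mathbb{L}(\mathbb{X},\mathbb{Y})^*})=Ext(B_{\mathbb{Y}^*})\otimes Ext(B_{\mathbb{X}})$, concludes that the \emph{dual} unit ball has finitely many extreme points, and then passes back to $\mathbb{L}(\mathbb{X},\mathbb{Y})$ itself; you instead work directly on the primal side, observing that $\|T\|=\max_i\|Tx_i\|$ over the finitely many extreme points $x_i$ of $B_\mathbb{X}$ (by convexity of $x\mapsto\|Tx\|$) and that membership of each $Tx_i$ in $B_\mathbb{Y}$ is a finite conjunction of linear inequalities, so that $B_{\mathbb{L}(\mathbb{X},\mathbb{Y})}$ is a compact finite intersection of closed half-spaces, i.e.\ a polyhedron in the sense of the paper's Definition. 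Your route is more elementary and self-contained (it avoids the external tensor-product result and the implicit duality step ``dual polyhedral $\Rightarrow$ primal polyhedral''), at the cost of invoking the standard equivalence between the half-space and finitely-many-extreme-points descriptions of a polytope, which the paper's definition already grants. Your closing observation that the implication ``$mn$-smooth $\Rightarrow$ extreme'' needs no polyhedrality is also correct and is a nice clarification not made explicit in the paper.
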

\begin{proof}
	Since $\mathbb{X},\mathbb{Y}$ are finite-dimensional Banach spaces, from \cite[Th. 1]{LO}, we get 
	\[Ext(B_{\mathbb{L}(\mathbb{X},\mathbb{Y})^*})=Ext(B_{\mathbb{Y}^*})\otimes Ext(B_{\mathbb{X}}).\]
	Since $Ext(B_{\mathbb{Y}^*})$ and  $Ext(B_{\mathbb{X}})$ are finite sets, there are only finitely many extreme points in the unit ball of $\mathbb{L}(\mathbb{X},\mathbb{Y})^*.$ Therefore, $\mathbb{L}(\mathbb{X},\mathbb{Y})^*$ is a polyhedral Banach space. Moreover, $\mathbb{L}(\mathbb{X},\mathbb{Y})^*$ is a finite-dimensional Banach space. Therefore, $\mathbb{L}(\mathbb{X},\mathbb{Y})$ is also a finite-dimensional polyhedral Banach space. Now, $\dim(\mathbb{L}(\mathbb{X},\mathbb{Y}))=mn.$ Hence, from Theorem \ref{th-ep}, we can conclude that  $T\in S_{\mathbb{L}(\mathbb{X},\mathbb{Y})}$ is an extreme contraction if and only if $T$ is $mn-$smooth. 
\end{proof}

Using Theorem \ref{th-ec}, we can now characterize the extreme contractions between two-dimensional polygonal Banach spaces.

\begin{theorem}\label{th-2dimec}
	Let $\mathbb{X},\mathbb{Y}$ be polygonal Banach spaces such that $\dim(\mathbb{X})=\dim(\mathbb{Y})=2.$ Let $T\in S_{\mathbb{L}(\mathbb{X},\mathbb{Y})}.$ Then $T$ is an extreme contraction if and only if either of the following holds:\\
	(i) $M_T\cap Ext(B_\mathbb{X})=\{\pm x_1,\pm x_2\}$ and $Tx_1,Tx_2\in Ext(B_\mathbb{Y}).$\\
	(ii) $M_T\cap Ext(B_\mathbb{X})=\{\pm x_1,\pm x_2,\pm x_3\}$ and $|\{x_i:Tx_i\in Ext(B_\mathbb{Y}),1\leq i\leq 3\}|\geq 2.$\\
	(iii) $M_T\cap Ext(B_\mathbb{X})=\{\pm x_1,\pm x_2,\pm x_3\},$ $Tx_1\in Ext(B_\mathbb{Y}),$ $Tx_2,Tx_3\notin Ext(B_\mathbb{Y})$ and there exist edges $F,G$ of $B_\mathbb{Y}$ such that $Tx_2\in F,Tx_3\in G$ and $F\neq \pm G.$\\
	(iv)  $|M_T\cap Ext(B_\mathbb{X})|\geq 8$ and there exists $x\in M_T\cap Ext(B_\mathbb{X})$ such that $Tx\in Ext(B_\mathbb{Y}).$\\
	(v)  $|M_T\cap Ext(B_\mathbb{X})|\geq 8$ and for each $x\in M_T\cap Ext(B_\mathbb{X}),$ $Tx\notin Ext(B_\mathbb{Y}).$ Moreover, there exist $x_i\in M_T\cap Ext(B_\mathbb{X})$ and $y_i^*\in Ext~J(Tx_i)$ for $1\leq i\leq 4$ such that $x_2=ax_1+bx_3,x_4=cx_1+dx_3,y_2^*=\alpha_1y_1^*+\alpha_2y_3^*,y_4^*=\beta_1y_1^*+\beta_2y_3^*$ and $\beta_1\alpha_2 ad-\beta_2\alpha_1 bc\neq 0.$
\end{theorem}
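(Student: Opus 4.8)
The plan is to reduce the statement to a dimension count via Theorem \ref{th-ec}. Since $\dim(\mathbb{L}(\mathbb{X},\mathbb{Y}))=4$, an operator $T\in S_{\mathbb{L}(\mathbb{X},\mathbb{Y})}$ is an extreme contraction if and only if $T$ is $4$-smooth, i.e. $\dim~span~Ext~J(T)=4$, and Lemma \ref{lemma-wojcik} identifies $Ext~J(T)=\{y^*\otimes x: x\in M_T\cap Ext(B_\mathbb{X}),\ y^*\in Ext~J(Tx)\}$. I would identify $\mathbb{L}(\mathbb{X},\mathbb{Y})^*$ with $\mathbb{X}\otimes\mathbb{Y}^*$ via $y^*\otimes x\mapsto x\otimes y^*$, so that the whole question becomes whether the simple tensors $\{x\otimes y^*: x\in M_T\cap Ext(B_\mathbb{X}),\ y^*\in Ext~J(Tx)\}$ span the $4$-dimensional space $\mathbb{X}\otimes\mathbb{Y}^*$. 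Two elementary facts organize everything. First, for $x\in M_T$ (so $Tx\neq 0$), the span of $\{x\otimes y^*:y^*\in Ext~J(Tx)\}$ is $2$-dimensional when $Tx\in Ext(B_\mathbb{Y})$ — because then, by Theorem \ref{th-ep} applied to $\mathbb{Y}$, $Ext~J(Tx)$ spans $\mathbb{Y}^*$ — and is $1$-dimensional otherwise, since a non-extreme unit vector of a polygonal space lies in the relative interior of a unique edge and hence has exactly one supporting functional. Second, for linearly independent $x,x'\in\mathbb{X}$ one has $\mathbb{X}\otimes\mathbb{Y}^*=(x\otimes\mathbb{Y}^*)\oplus(x'\otimes\mathbb{Y}^*)$, and more precisely, modulo $x\otimes\mathbb{Y}^*$ the class of $x'\otimes y'^*$ is a non-zero scalar multiple of $y'^*$ under the natural isomorphism $(\mathbb{X}\otimes\mathbb{Y}^*)/(x\otimes\mathbb{Y}^*)\cong\mathbb{Y}^*$.

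By Lemma \ref{lemma-wojcik} the set $M_T\cap Ext(B_\mathbb{X})$ is non-empty, and it is symmetric, so it equals $\{\pm x_1,\dots,\pm x_p\}$ for some $p\geq 1$ with $x_1,\dots,x_p$ pairwise linearly independent (a unit extreme point cannot be a nontrivial scalar multiple of a non-antipodal one). Put $V_i=span\{x_i\otimes y^*:y^*\in Ext~J(Tx_i)\}$, so $span~Ext~J(T)=V_1+\cdots+V_p$, and run the case analysis on $p$. If $p=1$, the sum lies in $x_1\otimes\mathbb{Y}^*$, of dimension $\leq 2$, so $T$ is not an extreme contraction. If $p=2$, the second fact gives $V_1+V_2=\mathbb{X}\otimes\mathbb{Y}^*$ exactly when $\dim V_1=\dim V_2=2$, i.e. both $Tx_1,Tx_2\in Ext(B_\mathbb{Y})$; this is case (i). If $p=3$ and at least two of the $Tx_i$ are extreme points of $B_\mathbb{Y}$, the corresponding two summands already fill $\mathbb{X}\otimes\mathbb{Y}^*$, giving case (ii); if exactly one is, say $Tx_1$, then $V_1=x_1\otimes\mathbb{Y}^*$, and modulo $V_1$ the generators of $V_2,V_3$ are non-zero multiples of the unique supporting functionals $y_2^*$ of $Tx_2$ and $y_3^*$ of $Tx_3$, so the sum is everything precisely when $y_2^*,y_3^*$ are independent, which in a polygon means the edges $F\ni Tx_2$, $G\ni Tx_3$ satisfy $F\neq\pm G$ — case (iii); and if no $Tx_i$ is extreme, $span~Ext~J(T)$ is generated by three tensors, so $\dim\leq 3$ and $T$ is not an extreme contraction.

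The substantive case is $p\geq 4$, equivalently $|M_T\cap Ext(B_\mathbb{X})|\geq 8$. Suppose some $Tx_i\in Ext(B_\mathbb{Y})$; I claim $T$ is then automatically an extreme contraction, which is case (iv). If two of the $Tx_i$ are extreme we finish as before; if exactly one is, say $Tx_1$, then $V_1=x_1\otimes\mathbb{Y}^*$ and modulo $V_1$ the remaining generators are non-zero multiples of the supporting functionals $y_2^*,\dots,y_p^*$ of $Tx_2,\dots,Tx_p$, which span $\mathbb{Y}^*$ unless $Tx_2,\dots,Tx_p$ all lie in $F\cup(-F)$ for a single edge $F$. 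Showing that this last situation is impossible is the step I expect to be the crux: if $y^*$ supports $F$ and, after replacing some $x_i$ by $-x_i$, $Tx_2,Tx_3,Tx_4\in F$, then $y^*(Tx_2)=y^*(Tx_3)=y^*(Tx_4)=1$, so $x_2,x_3,x_4$ are three distinct collinear points of $S_\mathbb{X}$; the middle one is then a proper convex combination of the other two, contradicting that it is an extreme point of $B_\mathbb{X}$.

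Finally, if $p\geq 4$ and no $Tx_i$ is extreme, every $V_i$ is spanned by the single tensor $x_i\otimes y_i^*$ with $\{y_i^*\}=J(Tx_i)$, so $T$ is an extreme contraction iff some four of these tensors are linearly independent. To match this with case (v), I would pick $x_1,x_3$ (automatically independent) and $y_1^*,y_3^*$ independent — possible after relabelling, since the $y_i^*$ of a spanning family cannot all be parallel — write $x_2=ax_1+bx_3$, $x_4=cx_1+dx_3$, $y_2^*=\alpha_1y_1^*+\alpha_2y_3^*$, $y_4^*=\beta_1y_1^*+\beta_2y_3^*$, and expand the four tensors in the basis $\{x_1\otimes y_1^*,\ x_1\otimes y_3^*,\ x_3\otimes y_1^*,\ x_3\otimes y_3^*\}$; the resulting $4\times 4$ determinant equals $\pm(\beta_1\alpha_2 ad-\beta_2\alpha_1 bc)$, so independence is equivalent to $\beta_1\alpha_2 ad-\beta_2\alpha_1 bc\neq 0$, which is case (v), in both directions. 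Collecting the four cases $p=1,2,3,\geq 4$ completes the proof; apart from the collinearity observation powering case (iv), the remaining work is the bookkeeping behind the two facts in the first paragraph and the routine determinant in case (v).
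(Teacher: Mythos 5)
Your proposal is correct, and at the top level it follows the same strategy as the paper: invoke Theorem \ref{th-ec} to translate extremality into $4$-smoothness and then split into cases according to $|M_T\cap Ext(B_\mathbb{X})|$. Where you genuinely diverge is in how the case analysis is executed. The paper outsources each case to the $k$-smoothness characterization theorems of \cite{MP} ([Th. 2.2], [Th. 3.1], [Th. 3.3]) and to \cite[Th. 2.2]{SRP}, whereas you prove everything directly from Lemma \ref{lemma-wojcik} by identifying $Ext~J(T)$ with a family of simple tensors in $\mathbb{X}\otimes\mathbb{Y}^*$ and doing the dimension count by hand: $\dim V_i=2$ or $1$ according as $Tx_i$ is a vertex or an edge-interior point, plus the quotient-by-$x_1\otimes\mathbb{Y}^*$ device. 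This buys a self-contained proof and, more importantly, makes two points explicit that the paper leaves implicit behind the phrase ``we can easily conclude'': first, your collinearity argument for case (iv) (three pairwise non-antipodal extreme points of a polygon cannot lie on one supporting line, so the supporting functionals $y_2^*,\dots,y_p^*$ cannot all be parallel); second, the explicit $4\times 4$ determinant $\beta_1\alpha_2ad-\beta_2\alpha_1bc$ in case (v), together with the observation that among four independent simple tensors one can always relabel so that $\{x_1,x_3\}$ and $\{y_1^*,y_3^*\}$ are both bases. The only small points worth writing out in a final version are the routine verifications you flag yourself: that $y^*\otimes x\mapsto x\otimes y^*$ really is a linear isomorphism $\mathbb{L}(\mathbb{X},\mathbb{Y})^*\cong\mathbb{X}\otimes\mathbb{Y}^*$ (the paper's own argument in Theorem \ref{th-weaklp}, using the rank-one operator $S$, does exactly this), and that a non-extreme unit vector of a polygon has a unique supporting functional because the edge containing it is unique and its endpoints are linearly independent.
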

\begin{proof}
	From Theorem \ref{th-ec}, we can say that $T$ is an extreme contraction if and only if $T$ is $4-$smooth. If $T$ is an extreme contraction, then from \cite[Th. 2.2]{SRP}, we get, $span(M_T\cap Ext(B_\mathbb{X}))=\mathbb{X},$ that is, $|M_T\cap Ext(B_\mathbb{X})|\geq 4.$  Observe that, if $|M_T\cap Ext(B_\mathbb{X})|< 4,$ then from \cite[Th. 2.2]{MP}, we can conclude that $T$ is not $4-$smooth. Therefore, if $T$ is $4-$smooth, then $|M_T\cap Ext(B_\mathbb{X})|\geq 4.$ Hence, we only assume that $|M_T\cap Ext(B_\mathbb{X})|\geq 4.$\\
	
	First let $|M_T\cap Ext(B_\mathbb{X})|=4.$ In this case, we show that $T$ is an extreme contraction if and only if $(i)$ holds. Let $M_T\cap Ext(B_\mathbb{X})=\{\pm x_1,\pm x_2\}$ for some $x_1,x_2\in S_\mathbb{X}.$ Clearly, $\{x_1,x_2\}$ is linearly independent. Therefore, from \cite[Th. 2.2]{MP}, we can conclude that $T$ is extreme contraction, that is, $T$ is $4-$smooth if and only if $Tx_1$ and $Tx_2$ are $2-$smooth. Thus, by Theorem \ref{th-ep}, $Tx_1,Tx_2\in Ext(B_\mathbb{Y}).$ Therefore, if $|M_T\cap Ext(B_\mathbb{X})|=4,$ then $T$ is an extreme contraction if and only if $(i)$ holds.  \\
	
    	Now, let  $|M_T\cap Ext(B_\mathbb{X})|=6.$ In this case, we show that $T$ is an extreme contraction if and only if either $(ii)$ or $(iii)$ holds. Let $M_T\cap Ext(B_\mathbb{X})=\{\pm x_1,\pm x_2,\pm x_3\}$ for some $x_1,x_2,x_3\in S_\mathbb{X}.$ In this case, from \cite[Th. 3.1]{MP}, we get, $T$ is an extreme contraction, that is, $T$ is $4-$smooth if and only if $Tx_1$ is non-smooth, $Tx_2,Tx_3$ are not interior point of same line segment of $S_\mathbb{Y}$ and  $Tx_2,-Tx_3$ are not interior point of same line segment of $S_\mathbb{Y}.$ Since $Tx_1$ is non-smooth, $Tx_1$ is $2-$smooth. Hence, $Tx_1\in Ext(B_\mathbb{Y}).$ Now, if at least one of $Tx_2,Tx_3\in Ext(B_\mathbb{Y}),$ then $(ii)$ holds. Suppose $Tx_2,Tx_3\notin Ext(B_\mathbb{Y}).$ Then $Tx_2,Tx_3$ are interior points of line segments of $S_\mathbb{Y}.$ So there exist edges $F,G$ of $B_\mathbb{Y}$ such that $Tx_2\in F,Tx_3\in G.$ Since $Tx_2,Tx_3$ are not interior point of same line segment of $S_\mathbb{Y}$ and  $Tx_2,-Tx_3$ are not interior point of same line segment of $S_\mathbb{Y},$ we get that $F\neq \pm G.$ Thus, $(iii)$ holds. Therefore, if $|M_T\cap Ext(B_\mathbb{X})|=6,$ then $T$ is an extreme contraction if and only if either $(ii)$ or $(iii)$ holds. \\
    
    Next, let  $|M_T\cap Ext(B_\mathbb{X})|\geq 8 .$ Then from \cite[Th. 3.3]{MP}, we can easily conclude that $T$ is an extreme contraction if and only if either $(iv)$ or $(v)$ holds.
\end{proof}

We now turn our attention to the study of weak L-P property of a pair of Banach spaces. In the following proposition, we show that if $\mathbb{X}$ is a reflexive Banach space, then the pair $(\mathbb{X},\mathbb{R})$ satisfies weak L-P property.
\begin{proposition}\label{prop-weaklp}
	Let $\mathbb{X}$ be a reflexive Banach space. Then the pair $(\mathbb{X},\mathbb{R})$ satisfies weak L-P property. 
\end{proposition}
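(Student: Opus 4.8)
The plan is to first unwind all the definitions in the special case $\mathbb{Y}=\mathbb{R}$. Since $\mathbb{L}(\mathbb{X},\mathbb{R})=\mathbb{X}^*$, an extreme contraction $T\in S_{\mathbb{L}(\mathbb{X},\mathbb{R})}$ is simply an extreme point $x^*$ of $B_{\mathbb{X}^*}$ (in fact the argument will not use the extremality of $x^*$, only that $\|x^*\|=1$). Moreover $Ext(B_\mathbb{R})=\{-1,1\}$. Hence the weak L-P condition $T(Ext(B_\mathbb{X}))\cap Ext(B_\mathbb{R})\neq\emptyset$ reduces to the following assertion: there exists $x\in Ext(B_\mathbb{X})$ with $x^*(x)=\pm 1$, i.e., the functional $x^*$ attains its norm at some extreme point of $B_\mathbb{X}$.

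Next I would use reflexivity to locate a candidate. Since $\mathbb{X}$ is reflexive, $B_\mathbb{X}$ is weakly compact, and $x^*$ is weakly continuous, so the slice $F:=\{x\in B_\mathbb{X}:x^*(x)=1\}$ is non-empty (this is exactly the place where reflexivity enters; alternatively one may invoke James' theorem). The set $F$ is convex and weakly closed, hence weakly compact as a weakly closed subset of the weakly compact set $B_\mathbb{X}$. Applying the Krein--Milman theorem in the locally convex space $(\mathbb{X},\text{weak})$, we obtain an extreme point $x_0$ of $F$.

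Then I would promote $x_0$ to an extreme point of $B_\mathbb{X}$ itself, using that $F$ is a face of $B_\mathbb{X}$, being the intersection of $B_\mathbb{X}$ with the supporting hyperplane $\{x:x^*(x)=1\}$. Concretely, if $x_0=\tfrac12(y+z)$ with $y,z\in B_\mathbb{X}$, then $1=x^*(x_0)=\tfrac12\big(x^*(y)+x^*(z)\big)$ together with $x^*(y)\le 1$, $x^*(z)\le 1$ forces $x^*(y)=x^*(z)=1$, so $y,z\in F$; extremality of $x_0$ in $F$ then yields $y=z=x_0$, whence $x_0\in Ext(B_\mathbb{X})$. Finally, from $x^*(x_0)=1$, $\|x^*\|=1$ and $\|x_0\|\le 1$ we get $\|x_0\|=1$ and $T(x_0)=x^*(x_0)=1\in Ext(B_\mathbb{R})$, so $T(Ext(B_\mathbb{X}))\cap Ext(B_\mathbb{R})\neq\emptyset$, which is the desired conclusion.

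I do not expect a genuine obstacle here; the only steps that require a moment's care are the use of reflexivity to guarantee $F\neq\emptyset$ (so that Krein--Milman can be invoked) and the face argument showing that an extreme point of the slice $F$ is automatically an extreme point of $B_\mathbb{X}$.
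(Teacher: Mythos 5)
Your proof is correct and follows essentially the same route as the paper: use reflexivity to get norm attainment, locate a norm-attaining extreme point of $B_\mathbb{X}$, and observe that its image $\pm 1$ lies in $Ext(B_\mathbb{R})$. The only difference is that the paper simply asserts that $f$ ``must attain its norm at some extreme point of $B_\mathbb{X}$,'' whereas you justify this step in full via the Krein--Milman theorem applied to the weakly compact face $F=\{x\in B_\mathbb{X}:f(x)=1\}$ --- a worthwhile addition, and your observation that extremality of $f$ is never actually used is also accurate.
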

\begin{proof}
	Suppose $f\in S_{\mathbb{L}(\mathbb{X},\mathbb{R})}$ is an extreme contraction, that is, $f$ is an extreme point of $B_{\mathbb{X}^*}.$ Since $\mathbb{X}$ is a reflexive Banach space, $M_f\neq\emptyset.$ Now, $f$ must attain its norm at some extreme  point of $B_\mathbb{X}.$ Let $x\in M_f\cap Ext(B_\mathbb{X}).$ Then $|f(x)|=1.$ Thus, $f(x)$ is an extreme point of $B_\mathbb{R}.$ Hence, $f(Ext(B_\mathbb{X}))\cap Ext(B_\mathbb{R})\neq\emptyset.$ Therefore, the pair $(\mathbb{X},\mathbb{R})$ satisfies weak L-P property.   	
\end{proof}

In the next theorem, we obtain a sufficient condition for a pair of finite-dimensional polyhedral Banach spaces to satisfy weak L-P property.

\begin{theorem}\label{th-weaklp}
	Let $\mathbb{X},\mathbb{Y}$ be polyhedral Banach spaces and $\dim(\mathbb{X})=n,\dim(\mathbb{Y})=m.$ Let $|Ext(B_\mathbb{X})|=2(n+p).$ If $mp< n+p,$ then the pair $(\mathbb{X},\mathbb{Y})$ satisfies weak L-P property.
\end{theorem}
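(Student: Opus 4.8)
The plan is to take an arbitrary extreme contraction $T\in S_{\mathbb{L}(\mathbb{X},\mathbb{Y})}$ and argue that it must send at least one extreme point of $B_\mathbb{X}$ to an extreme point of $B_\mathbb{Y}$, by a counting argument on the order of smoothness. First I would invoke Theorem \ref{th-ec}: since $\mathbb{X},\mathbb{Y}$ are finite-dimensional polyhedral, $T$ being an extreme contraction is equivalent to $T$ being $mn$-smooth, i.e. $\dim \mathrm{span}\, Ext\, J(T) = mn = \dim \mathbb{L}(\mathbb{X},\mathbb{Y})$. Next I would apply Lemma \ref{lemma-wojcik} to describe $Ext\, J(T)$ explicitly: every extreme point of $J(T)$ has the form $y^*\otimes x$ with $x\in M_T\cap Ext(B_\mathbb{X})$ and $y^*\in Ext\, J(Tx)$. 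So the whole span is $\mathrm{span}\{\, y^*\otimes x : x\in M_T\cap Ext(B_\mathbb{X}),\ y^*\in Ext\, J(Tx)\,\}$, and this span must be all of $\mathbb{L}(\mathbb{X},\mathbb{Y})^*$, which has dimension $mn$.

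The key step is the dimension bookkeeping. Write $M_T\cap Ext(B_\mathbb{X}) = \{\pm x_1,\dots,\pm x_s\}$ (note $y^*\otimes(-x) = -(y^*\otimes x)$, so only $s$ of the $x_i$ contribute independently); since $|Ext(B_\mathbb{X})| = 2(n+p)$ we have $s\le n+p$. For each $i$, let $k_i = \dim \mathrm{span}\, Ext\, J(Tx_i)$ be the order of smoothness of $Tx_i$ in $\mathbb{Y}$, so $1\le k_i\le m$, and by Theorem \ref{th-ep}, $Tx_i\in Ext(B_\mathbb{Y})$ iff $k_i = m$. The span of the functionals $\{y^*\otimes x_i : y^*\in Ext\, J(Tx_i)\}$ contributed by a single $x_i$ has dimension exactly $k_i$ (it is a copy of $\mathrm{span}\, Ext\, J(Tx_i)$), so summing over the $s$ directions gives $mn = \dim\mathrm{span}\, Ext\, J(T) \le \sum_{i=1}^s k_i$. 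The plan is to derive a contradiction from the assumption that $T$ fails weak L-P, i.e. that $k_i \le m-1$ for every $i$: then $mn \le \sum_{i=1}^s k_i \le s(m-1) \le (n+p)(m-1) = mn + mp - n - p$, forcing $mp - n - p \ge 0$, i.e. $mp \ge n+p$, contradicting the hypothesis $mp < n+p$. Hence some $k_i = m$, so $Tx_i\in Ext(B_\mathbb{Y})$, and $T(Ext(B_\mathbb{X}))\cap Ext(B_\mathbb{Y})\neq\emptyset$.

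The main obstacle is justifying the inequality $mn \le \sum_{i=1}^s k_i$ rigorously — namely that the span of the full collection of rank-one functionals is bounded in dimension by the sum, over the distinct domain directions $x_i$, of the local dimensions $k_i$. This follows because $\mathbb{L}(\mathbb{X},\mathbb{Y})^* \cong \mathbb{Y}^*\otimes\mathbb{X} \cong \mathbb{L}(\mathbb{X}^{**},\mathbb{Y}^*)$ (finite dimensions), under which $y^*\otimes x_i \mapsto$ the rank-one map $x^{**}\mapsto x^{**}(x_i^{\,\prime})\, y^*$ for a suitable functional, and all functionals coming from a fixed $x_i$ land in the subspace $\{ \text{maps supported on the line } \mathbb{R}x_i\}$ of dimension $m$; more precisely their span has dimension exactly $\dim\mathrm{span}\,Ext\,J(Tx_i) = k_i$ since the $x_i$ pieces are independent across distinct $i$. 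I would make this precise by choosing a basis adapted to the $x_i$ (extending $\{x_1,\dots,x_s\}$ if $s<n$, or extracting a basis if $s>n$) and observing the corresponding block decomposition; subadditivity of dimension over the blocks then gives the bound. One should also double-check the edge cases $s < n$: if $T$ is an extreme contraction then by \cite[Th. 2.2]{SRP} $\mathrm{span}(M_T\cap Ext(B_\mathbb{X})) = \mathbb{X}$, so in fact $s\ge n$, and the counting $n\le s\le n+p$ is exactly the range that makes the arithmetic tight.
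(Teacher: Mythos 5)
Your proposal is correct and follows essentially the same route as the paper's proof: reduce to $mn$-smoothness via Theorem \ref{th-ec}, describe $Ext\,J(T)$ via Lemma \ref{lemma-wojcik}, and bound $\dim\mathrm{span}\,Ext\,J(T)\le\sum_i k_i\le (m-1)(n+p)<mn$ under the assumption that no $Tx_i$ is extreme. The dimension-counting step you flag as the main obstacle is handled in the paper exactly as you suggest, by noting that the functionals attached to a fixed $x_i$ span a subspace $W_i$ of dimension $k_i$ and using subadditivity of dimension for the sum $W_1+\cdots+W_r$.
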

\begin{proof}
	Let $T\in S_{\mathbb{L}(\mathbb{X},\mathbb{Y})}$ be an extreme contraction. We show that there exists $x\in M_T\cap Ext(B_\mathbb{X})$ such that $Tx\in Ext(B_\mathbb{Y}).$  From \cite[Th. 2.2]{SRP},  we get $span(M_T\cap Ext(B_\mathbb{X}))=\mathbb{X},$ that is, $M_T\cap Ext(B_\mathbb{X})$ contains at least $n$ linearly independent elements. Let $M_T\cap Ext(B_\mathbb{X})=\{\pm x_i:1\leq i\leq r\}$ such that $\{x_1,x_2,\ldots, x_n\}$ is linearly independent. Then $r\leq n+p.$ If possible, suppose that $Tx_i\notin Ext(B_\mathbb{Y}),$ for any $i$,  $1\leq i\leq r.$  Then by Theorem \ref{th-ep}, $Tx_i$ is not $m-$smooth for each $1\le i\leq r.$ Let $Tx_i$ be $k_i-$smooth for all $1\leq i\leq r.$ Then $k_i\leq (m-1)$ for all $1\leq i\leq r.$ Clearly, $k_i=\dim~span~Ext~J(Tx_i).$
	Let $\{ y_{ij}^*\in Ext ~J(Tx_i):1\leq j\leq k_i\}$ be a basis of $span~Ext~J(Tx_i)$ for each $1\leq i\leq r.$  Let 
	\[W_i=span ~\{y^*\otimes x_i:y^*\in Ext ~J(Tx_i)\} ~\text{for each } 1\leq i\leq r.\]
	We first show that $B_i=\{y_{ij}^*\otimes x_i:1\leq j\leq k_i\}$ is a basis of $W_i.$ 
	Let
	$$\sum\limits_{1\leq j\leq m_i}a_j(y_{ij}^*\otimes x_i)=0, ~\text{where}~ a_j\in \mathbb{R}~\text{ for~ all~} 1\leq j\leq m_i.$$
	Choose a non-zero vector $y\in\mathbb{Y}.$ Define $S\in \mathbb{L}(\mathbb{X},\mathbb{Y})$ by 
	\begin{equation} \label{eq1}
	\begin{split}
	S x_i&=y\\
	S x_l&= 0 \ \forall \ l,  1\leq l(\neq i)\leq n. 
	\end{split}
	\end{equation}
	Now,
	\begin{eqnarray*}
		&&\sum\limits_{1\leq j\leq k_i}a_j(y_{ij}^*\otimes x_i)(S)=0\\
		&\Rightarrow&\sum\limits_{1\leq j\leq k_i}a_jy_{ij}^*S(x_i)=0\\
		&\Rightarrow& \sum\limits_{1\leq j\leq k_i}a_jy_{ij}^*(y)=0\\
		&\Rightarrow& \sum\limits_{1\leq j\leq k_i}a_jy_{ij}^*=0,~(\text{since~}y\in \mathbb{Y}~\text{is ~arbitrary})\\
		&\Rightarrow& a_j=0 ~\text{ for~ all~} 1\leq j\leq k_i.
	\end{eqnarray*}
	Thus, $B_i$ is linearly independent. It can be easily verified that $B_i$ is a spanning set of $W_i.$ Hence, $B_i$ is a basis of $W_i$  and so $\dim(W_i)=k_i$ for each $1\leq i\leq r.$ Now, let $T$ be $k-$smooth. Then 
	\begin{eqnarray*}
		k&=& \dim~span~J(T)\\
		&=& \dim~span~Ext~J(T)\\
		&=& \dim~span\{y^*\otimes x_i:y^*\in Ext~J(Tx_i),1\leq i\leq r\}\\
		&=& dim~W,\text{~where,}\\
		W&=&span ~\{y^*\otimes x_i:y^*\in Ext~ J(Tx_i),1\leq i\leq r \}.
	\end{eqnarray*}
	Clearly, $W\subseteq W_1+W_2+\ldots+W_r.$ Therefore,
	\[k=\dim(W)\leq\dim(\sum_{i=1}^{r}W_i)\leq \sum_{i=1}^{r}\dim(W_i)=\sum_{i=1}^{r}k_i\leq (m-1)r\leq(m-1)(n+p).\]
	Now, $mp< n+p$ implies that $(m-1)(n+p)<mn,$ that is,  $k<mn.$ Therefore, from Theorem \ref{th-ec}, we conclude that $T$ is not an extreme contraction. This is a contradiction. Thus, there exists $1\leq i\leq r$ such that $Tx_i\in Ext(B_\mathbb{Y}).$ Hence, the pair $(\mathbb{X},\mathbb{Y})$ satisfies weak L-P property. This completes the proof of the theorem.
\end{proof}

The following corollary now follows easily from Theorem \ref{th-weaklp}. 

\begin{corollary}\label{cor-weaklp}
	Let $\mathbb{X},\mathbb{Y}$ be  polyhedral Banach spaces such that $\dim(\mathbb{X})=n,\dim(\mathbb{Y})=m.$ Let $|Ext(B_\mathbb{X})|=2n+2$ and $m\leq n.$ Then the pair $(\mathbb{X},\mathbb{Y})$ satisfies weak L-P property.  
\end{corollary}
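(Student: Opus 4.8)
The plan is to derive Corollary \ref{cor-weaklp} as a direct specialization of Theorem \ref{th-weaklp}. The only task is to verify that the hypotheses of the corollary are a special case of the hypotheses of the theorem, so that the conclusion — that the pair $(\mathbb{X},\mathbb{Y})$ satisfies weak L-P property — transfers immediately.

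First I would set the parameter $p$ in Theorem \ref{th-weaklp} to be $p=1$. With this choice, the hypothesis $|Ext(B_\mathbb{X})|=2(n+p)$ of the theorem becomes $|Ext(B_\mathbb{X})|=2(n+1)=2n+2$, which is precisely the hypothesis imposed in the corollary. Next I would check that the arithmetic condition $mp<n+p$ of the theorem reduces, under $p=1$, to $m<n+1$, i.e. $m\leq n$ (since $m,n$ are positive integers), which is again exactly the hypothesis $m\leq n$ of the corollary.

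Thus, under the assumptions $|Ext(B_\mathbb{X})|=2n+2$ and $m\leq n$, all the hypotheses of Theorem \ref{th-weaklp} are satisfied with $p=1$, and applying that theorem yields that the pair $(\mathbb{X},\mathbb{Y})$ satisfies weak L-P property. There is no genuine obstacle here; the entire content is the bookkeeping of substituting $p=1$ and observing that the strict inequality $m<n+1$ between integers is equivalent to $m\leq n$. One small point worth stating explicitly in the write-up is that $\mathbb{X}$ is at least two-dimensional whenever $|Ext(B_\mathbb{X})|=2n+2$ is consistent with $n\geq 1$, and that the case $n=1$ forces $m=1$ as well, where the conclusion is trivial; but since Theorem \ref{th-weaklp} already covers these cases uniformly, no separate argument is needed.
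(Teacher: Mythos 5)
Your proposal is correct and matches the paper's intent exactly: the paper states that the corollary ``follows easily from Theorem \ref{th-weaklp},'' and the intended derivation is precisely your substitution $p=1$, under which $|Ext(B_\mathbb{X})|=2(n+p)$ becomes $2n+2$ and the condition $mp<n+p$ becomes $m<n+1$, i.e.\ $m\leq n$ for integers. Nothing further is needed.
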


\begin{remark}
	(i) In  \cite[Th. 2.1]{RRBS}, Ray et al. proved that if $ \mathbb{X}$ is an $n$-dimensional  polyhedral Banach space with exactly $(2n+2)$ extreme points and $ m \leq n ,$ then the pair $( \mathbb{X}, \ell_{\infty}^n) $ satisfies weak L-P property. Clearly, Corollary \ref{cor-weaklp} improves on  \cite[Th. 2.1]{RRBS}.\\
	(ii) In \cite[Th. 2.2]{RRBS}, Ray et al. proved that if $\mathbb{X}$ is an $n-$dimensional polyhedral Banach space with exactly $(2n+2)$ extreme points and $m(m-1)\leq n,$ then the pair $(\mathbb{X},\ell_1^m)$ satisfies weak L-P property.  Observe that if $m>1$ and $m(m-1)\leq n,$ then $m\leq m(m-1) \leq n.$ Therefore, for $m>1,$ Corollary \ref{cor-weaklp} improves on \cite[Th. 2.2]{RRBS}.\\
	(iii)  Our Theorem  \ref{th-weaklp} unifies Theorems \cite[Th. 2.1]{RRBS} and \cite[Th. 2.2]{RRBS} and holds for the pair $(\mathbb{X}, \mathbb{Y}) $ with $\mathbb{Y}$ as $m$-dimensional polyhedral Banach space instead of the special Banach spaces $ \ell_{\infty}^m$ ( \cite[Th. 2.1]{RRBS} ) and $ \ell_{1}^m$ ( \cite[Th. 2.2]{RRBS} ).
\end{remark}

The natural question that arises now is whether the sufficient condition given in Theorem \ref{th-weaklp} for a pair of Banach spaces to satisfy weak L-P property is also a necessary condition or not. In the next theorem, we show that the condition is both necessary and sufficient  for the pair $(\mathbb{X},\ell_{\infty}^2),$ where $\mathbb{X}$ is two-dimensional polygonal Banach space.

\begin{theorem}\label{th-weaklpch}
	Let $\mathbb{X}$ be a two-dimensional polygonal Banach space. Then the pair $(\mathbb{X},\ell_{\infty}^2)$ satisfies weak L-P property if and only if $|Ext(B_\mathbb{X})|\leq 6.$ 
\end{theorem}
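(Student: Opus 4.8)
The statement is a biconditional, and the two directions call for quite different arguments, so I would treat them separately.

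\medskip

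\noindent\textbf{The ``if'' direction.} Since $\mathbb{X}$ is real and two-dimensional polygonal, $B_{\mathbb{X}}$ is a centrally symmetric polygon, so $|Ext(B_{\mathbb{X}})|$ is even and at least $4$; hence $|Ext(B_{\mathbb{X}})|\le 6$ forces it to be $4$ or $6$. Writing $|Ext(B_{\mathbb{X}})|=2(n+p)$ with $n=\dim(\mathbb{X})=2$ gives $p\in\{0,1\}$, and with $m=\dim(\ell_{\infty}^2)=2$ the hypothesis $mp<n+p$ of Theorem \ref{th-weaklp} reads $2p<2+p$, which holds for $p\in\{0,1\}$. Thus Theorem \ref{th-weaklp} immediately yields that $(\mathbb{X},\ell_{\infty}^2)$ has the weak L-P property; that is the whole of this direction.

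\medskip

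\noindent\textbf{The ``only if'' direction (contrapositive).} Assume $B_{\mathbb{X}}$ is a centrally symmetric $2N$-gon with $N\ge 4$; I will exhibit one extreme contraction $T\colon\mathbb{X}\to\ell_{\infty}^2$ sending no extreme point of $B_{\mathbb{X}}$ to an extreme point of $B_{\ell_{\infty}^2}$. The structural fact driving the construction is that $\mathbb{L}(\mathbb{X},\ell_{\infty}^2)$ is isometric to the $\ell_{\infty}$-sum $\mathbb{X}^*\oplus_\infty\mathbb{X}^*$ via $T\mapsto(e_1^*\circ T,\,e_2^*\circ T)$, where $e_1^*,e_2^*$ are the two coordinate functionals of $\ell_{\infty}^2$; hence its unit ball is $B_{\mathbb{X}^*}\times B_{\mathbb{X}^*}$, and $T=(f_1,f_2)$ is an extreme contraction exactly when both $f_1,f_2\in Ext(B_{\mathbb{X}^*})$. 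By standard polytope duality (or Theorem \ref{th-ep} applied in $\mathbb{X}^*$), $Ext(B_{\mathbb{X}^*})$ is precisely the set of the $2N$ unit functionals supporting the edges of $B_{\mathbb{X}}$: writing the vertices of $B_{\mathbb{X}}$ in cyclic order as $v_0,\dots,v_{2N-1}$ with $v_{j+N}=-v_j$, and letting $w_i$ be the functional with $w_i(v_i)=w_i(v_{i+1})=1$, one has $Ext(B_{\mathbb{X}^*})=\{w_i:0\le i\le 2N-1\}$ and $|w_i(v_j)|=1$ if and only if $j\in\{i,\,i+1,\,i+N,\,i+N+1\}$ (indices read mod $2N$).

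\medskip

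\noindent\textbf{The construction.} Take $f_1=w_0$, $f_2=w_2$ and $T=(w_0,w_2)$, which is an extreme contraction of norm $1$ (norm $1$ because $\|w_0\|=\|w_2\|=1$). A vertex $v_j$ is carried to a corner $(\pm1,\pm1)=Ext(B_{\ell_{\infty}^2})$ only if $|w_0(v_j)|=|w_2(v_j)|=1$, i.e. only if $v_j$ lies in $\{v_0,v_1,v_N,v_{N+1}\}\cap\{v_2,v_3,v_{N+2},v_{N+3}\}$; for $N\ge 4$ these index sets are disjoint mod $2N$, so no vertex of $B_{\mathbb{X}}$ is sent to a corner. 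Hence $T(Ext(B_{\mathbb{X}}))\cap Ext(B_{\ell_{\infty}^2})=\emptyset$ and the weak L-P property fails. (Consistency check: each of the two ``extended endpoint'' sets has $4$ elements, so they can be disjoint only when $2N\ge 8$; for a hexagon no such choice exists, matching the borderline case $|Ext(B_{\mathbb{X}})|=6$, where the property does hold.)

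\medskip

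\noindent\textbf{Where the work is.} The only slightly delicate point is the identification of $Ext(B_{\mathbb{X}^*})$ with the edge-supporting functionals of $B_{\mathbb{X}}$ together with the exact list of vertices each of them sends to $\pm1$; after that the argument collapses to a one-line disjointness count. If one prefers to avoid the product description, the same $T$ can be certified as an extreme contraction by checking it is $mn=4$-smooth via Theorem \ref{th-ec}: by Lemma \ref{lemma-wojcik}, $Ext\,J(T)$ contains $\pm e_1^*\otimes v_0,\ \pm e_1^*\otimes v_1,\ \pm e_2^*\otimes v_2,\ \pm e_2^*\otimes v_3$, and since consecutive vertices of $B_{\mathbb{X}}$ are linearly independent these span all of $\mathbb{L}(\mathbb{X},\ell_{\infty}^2)^*$; alternatively one can verify $T$ against case (v) of Theorem \ref{th-2dimec}.
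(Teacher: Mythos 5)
Your proof is correct. The ``if'' direction is exactly the paper's: specialize Theorem \ref{th-weaklp} with $n=m=2$ and $p\in\{0,1\}$. For the converse you construct, in substance, the same counterexample as the paper --- an operator $\mathbb{X}\to\ell_{\infty}^2$ whose two coordinate functionals are extreme points of $B_{\mathbb{X}^*}$ that are neither adjacent nor adjacent-to-an-antipode, so that no vertex of $B_\mathbb{X}$ lands on a corner of $B_{\ell_{\infty}^2}$ --- but you certify it by a different mechanism. The paper defines $T\in\mathbb{L}(\ell_1^2,\mathbb{X}^*)$ by $Te_1=x_1$, $Te_2=x_3$ with $L[x_1,x_3]$ and $L[x_1,-x_3]$ not edges of $B_{\mathbb{X}^*}$, shows $T$ is $4$-smooth (hence extreme by Theorem \ref{th-ec}), passes to the adjoint $T^*:\mathbb{X}\to\ell_{\infty}^2$, and rules out $T^*u\in Ext(B_{\ell_{\infty}^2})$ by deriving $u(x_1)=u(x_3)=1$ and hence that $L[x_1,x_3]$ is an edge. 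You instead use the isometry $\mathbb{L}(\mathbb{X},\ell_{\infty}^2)\cong\mathbb{X}^*\oplus_\infty\mathbb{X}^*$, so that extremality is read off from $Ext(B_{A}\times B_{B})=Ext(B_A)\times Ext(B_B)$ with no appeal to $k$-smoothness, and the disjointness of $\{0,1,N,N+1\}$ and $\{2,3,N+2,N+3\}$ mod $2N$ for $N\ge 4$ plays the role of the paper's edge condition; the two conditions are equivalent under polygon duality, since $w_0,w_2$ non-adjacent and $w_0,-w_2=w_{N+2}$ non-adjacent as vertices of $B_{\mathbb{X}^*}$ is exactly what fails for $N\le 3$, matching the hexagon borderline you note. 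Your route is more explicit and self-contained; the paper's stays inside its $k$-smoothness framework, which is what allows the subsequent remark to replace $\ell_{\infty}^2$ by an arbitrary polygonal $\mathbb{Y}$ with four extreme points. The two facts you invoke without proof --- the $\ell_\infty$-sum description of $\mathbb{L}(\mathbb{X},\ell_{\infty}^2)$ together with the extreme-point structure of a product of balls, and the identification of $Ext(B_{\mathbb{X}^*})$ with the edge-supporting functionals $w_i$ with $|w_i(v_j)|=1$ exactly for $j\in\{i,i+1,i+N,i+N+1\}$ --- are standard but are not recorded in the paper, so a sentence of justification for each would make the argument complete.
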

\begin{proof}
	Let $|Ext(B_\mathbb{X})|\leq 6.$ Then from Theorem \ref{th-weaklp}, we conclude that the pair $(\mathbb{X},\ell_{\infty}^2)$ satisfies weak L-P property. Conversely, suppose that $|Ext(B_\mathbb{X})|\geq 8.$ We show that the pair $(\mathbb{X},\ell_{\infty}^2)$ does not satisfy weak L-P property. Clearly, $\mathbb{X}^*$ is a two-dimensional polygonal Banach space such that $|Ext(B_{\mathbb{X}^*})|\geq 8.$ So we can choose $\{x_1,x_2,x_3,x_4\}\subseteq Ext(B_{\mathbb{X}^*})$ such that $L[x_1,x_3]$ and $L[x_1,-x_3]$ are not edges of $B_{\mathbb{X}^*}.$ Now, define $T\in \mathbb{L}(\ell_1^2,\mathbb{X}^*)$ by $Te_1=x_1$ and $Te_2=x_3,$ where $e_1=(1,0)$ and $e_2=(0,1).$ Then $M_T=\{\pm e_1,\pm e_2\}.$ Since $x_1,x_3$ are extreme points of $B_{\mathbb{X}^*}$ and $\mathbb{X}^*$ is polygonal Banach space, by Theorem \ref{th-ep}, $x_1,x_3$ are $2-$smooth points. Thus, by \cite[Th. 2.2]{MP}, $T$ is $4-$smooth. Hence, by Theorem \ref{th-ec}, $T$ is an extreme contraction. It is easy to observe that $T^*:\mathbb{X}\to \ell_{\infty}^2$ is also an extreme contraction. 	 We claim that $T^*(Ext(B_\mathbb{X}))\cap Ext(B_{\ell_{\infty}^2})=\emptyset.$  If possible, suppose that there exists $u\in Ext(B_\mathbb{X})$ such that $T^*u\in Ext(B_{\ell_{\infty}^2}).$ Then $T^*u$ is $2-$smooth.  Clearly,  $Ext~J(T^*u) \subset Ext(B_{\ell_1^2}) = \{\pm e_1, \pm e_2\}$ and so without loss of generality, we may assume that $Ext~J(T^*u)=\{e_1,e_2\}.$ Thus,
	\begin{eqnarray*}
		&& e_1(T^*u)=e_2(T^*u)=1\\
		&\Rightarrow & u(Te_1)=u(Te_2)=1\\
		&\Rightarrow& u(x_1)=u(x_3)=1\\
		&\Rightarrow& u(tx_1+(1-t)x_3)=1~\text{for~all}~t\in [0,1]\\
		&\Rightarrow& \|tx_1+(1-t)x_3\|=1~\text{for~all}~t\in [0,1].
	\end{eqnarray*}
	Hence, $L[x_1,x_3]$ is an edge of $B_{\mathbb{X}^*},$ a contradiction. Therefore, $T^*(Ext(B_\mathbb{X}))\cap Ext(B_{\ell_{\infty}^2})=\emptyset.$ Thus, if $|Ext(B_\mathbb{X})|\geq 8,$ then the pair $(\mathbb{X},\ell_{\infty}^2)$ does not satisfy weak L-P property. This completes the proof of the theorem.
\end{proof}

\begin{remark}
	Observe that, if we assume that $\mathbb{Y}$ is a two-dimensional polygonal Banach space such that $|Ext(B_\mathbb{Y})|=4,$ then similarly as in Theorem \ref{th-weaklpch}, we can show that the pair $(\mathbb{X},\mathbb{Y})$ satisfies weak L-P property if and only if $|Ext(B_\mathbb{X})|\leq 6.$
\end{remark}

In the next theorem, we show that,   given a  two-dimensional polygonal Banach space $\mathbb{X}$ with  $|Ext(B_\mathbb{X})|\geq 8,$ there exists a two-dimensional polygonal Banach space $ \mathbb{Y}$  with  $|Ext(B_\mathbb{Y})|\geq 6,$ such that  the pair $(\mathbb{X},\mathbb{Y})$ does not satisfy weak L-P property.

\begin{theorem}\label{th-weak-lp}
	Let $\mathbb{X}$ be a two-dimensional polygonal Banach space such that $|Ext(B_\mathbb{X})|\geq 8.$ Then for each $n\in\mathbb{N}\setminus\{1,2\},$ there exists a two-dimensional polygonal Banach space $\mathbb{Y}$ such that $|Ext(B_\mathbb{Y})|=2n$ and the pair $(\mathbb{X},\mathbb{Y})$ does not satisfy weak L-P property.
\end{theorem}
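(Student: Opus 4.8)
The plan is to produce, for each prescribed $n \geq 3$, a concrete two-dimensional polygonal space $\mathbb{Y}$ with $|Ext(B_\mathbb{Y})| = 2n$ together with an extreme contraction $T \in S_{\mathbb{L}(\mathbb{X},\mathbb{Y})}$ whose image misses every extreme point of $B_\mathbb{Y}$. The guiding principle is the same as in Theorem \ref{th-weaklpch}: by Theorem \ref{th-ec}, $T$ is an extreme contraction precisely when $T$ is $4$-smooth, and when $\mathbb{X}$ is polygonal with $|Ext(B_\mathbb{X})| \geq 8$ the structure results \cite[Th. 3.1, Th. 3.3]{MP} allow one to build $4$-smooth operators by prescribing the action of $T$ on a suitable set $M_T \cap Ext(B_\mathbb{X})$ of norm-attaining extreme points, together with the supporting functionals at their images. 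Since $\mathbb{X}^*$ is again two-dimensional polygonal with $|Ext(B_{\mathbb{X}^*})| \geq 8$, it will be convenient (as in Theorem \ref{th-weaklpch}) to construct the operator on $\mathbb{X}^*$ or directly on $\mathbb{X}$ and then read off what is needed.

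The concrete construction I would carry out: take $\mathbb{Y}$ to be a polygon whose vertices are $\pm y_1, \dots, \pm y_n$, arranged so that among the edges there are two non-antipodal edges $F = L[y_a, y_b]$ and $G = L[y_c, y_d]$ (this is possible as soon as $n \geq 3$, since then $B_\mathbb{Y}$ has at least three pairs of edges). Pick two linearly independent vectors $x_1, x_2 \in Ext(B_\mathbb{X})$ such that $L[x_1,x_2]$ is \emph{not} an edge of $B_\mathbb{X}$ — possible because $|Ext(B_\mathbb{X})| \geq 8$ forces many such pairs, exactly as in the choice made in Theorem \ref{th-weaklpch}. Define $T$ by sending $x_1$ into the relative interior of the edge $F$ and $x_2$ into the relative interior of $G$, scaled so that $\|Tx_1\| = \|Tx_2\| = 1$, and extended linearly; one checks $\|T\| = 1$ and $M_T \cap Ext(B_\mathbb{X}) = \{\pm x_1, \pm x_2\}$. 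Then $Tx_1$ is smooth (interior of an edge of a polygon), $Tx_2$ is smooth, so neither is $2$-smooth; by \cite[Th. 2.2]{MP} together with the non-edge hypothesis on $L[x_1,x_2]$ one arranges that $T$ is nonetheless $4$-smooth — this is where the requirement $F \neq \pm G$ (equivalently, that $Tx_1, Tx_2$ and $Tx_1, -Tx_2$ do not lie on a common line segment of $S_\mathbb{Y}$) enters, matching hypothesis (iii) of Theorem \ref{th-2dimec} in the degenerate case where only these two norm-attaining directions appear but neither image is extreme. Hence by Theorem \ref{th-ec}, $T$ is an extreme contraction.

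Finally I would verify that $T(Ext(B_\mathbb{X})) \cap Ext(B_\mathbb{Y}) = \emptyset$. Since $M_T \cap Ext(B_\mathbb{X}) = \{\pm x_1, \pm x_2\}$, for any extreme point $u$ of $B_\mathbb{X}$ that is not one of these we have $\|Tu\| < 1$, so $Tu$ lies strictly inside $B_\mathbb{Y}$ and is certainly not extreme; and for $u \in \{\pm x_1, \pm x_2\}$ the image $Tu$ lies in the relative interior of an edge $F$ or $G$ and so is not an extreme point of $B_\mathbb{Y}$. Thus no extreme point of $B_\mathbb{X}$ maps to an extreme point of $B_\mathbb{Y}$, so the pair $(\mathbb{X}, \mathbb{Y})$ fails weak L-P property.

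The main obstacle is the $4$-smoothness verification in the middle step: one must be sure that an operator whose only norm-attaining extreme directions are a single independent pair $\pm x_1, \pm x_2$, with both images merely smooth (not extreme) points of $S_\mathbb{Y}$, can still be $4$-smooth, and pin down exactly the configuration of the edges $F, G$ and of $L[x_1,x_2]$ that makes this happen. This amounts to carefully invoking the relevant case of \cite[Th. 3.1]{MP} (or equivalently condition (iii) of Theorem \ref{th-2dimec}) and checking that $Ext\,J(T)$, described via Lemma \ref{lemma-wojcik} as $\{y^*\otimes x_i : x_i \in \{\pm x_1,\pm x_2\},\ y^* \in Ext\,J(Tx_i)\}$, spans a $4$-dimensional subspace of $\mathbb{L}(\mathbb{X},\mathbb{Y})^*$ — which forces the non-edge condition on $L[x_1,x_2]$ in $B_{\mathbb{X}}$ and the $F \neq \pm G$ condition, and one must confirm these are simultaneously realizable for the given $\mathbb{X}$ and for every $n \geq 3$.
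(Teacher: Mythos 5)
Your construction cannot work as stated, and the obstruction is exactly the one your own Theorem~\ref{th-2dimec} records. You propose an operator $T$ with $M_T\cap Ext(B_\mathbb{X})=\{\pm x_1,\pm x_2\}$, $\{x_1,x_2\}$ linearly independent, and $Tx_1,Tx_2$ lying in the relative interiors of edges $F,G$ of $B_\mathbb{Y}$. Then $Tx_1$ and $Tx_2$ are smooth points of $\mathbb{Y}$, so $Ext\,J(Tx_i)=\{y_i^*\}$ is a singleton for $i=1,2$, and by Lemma~\ref{lemma-wojcik} the set $Ext\,J(T)$ reduces to $\{\pm(y_1^*\otimes x_1),\pm(y_2^*\otimes x_2)\}$, whose span is $2$-dimensional. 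Hence $T$ is $2$-smooth, not $4$-smooth, and by Theorem~\ref{th-ec} it is \emph{not} an extreme contraction. This is precisely case (i) of Theorem~\ref{th-2dimec}: when $|M_T\cap Ext(B_\mathbb{X})|=4$, being an extreme contraction forces $Tx_1,Tx_2\in Ext(B_\mathbb{Y})$, which is incompatible with your goal of missing every extreme point of $B_\mathbb{Y}$. Your appeal to condition (iii) is also misplaced: that case requires $|M_T\cap Ext(B_\mathbb{X})|=6$ \emph{and} one image $Tx_1\in Ext(B_\mathbb{Y})$, so it too cannot produce a counterexample to weak L-P. An inspection of Theorem~\ref{th-2dimec} shows the only configuration in which an extreme contraction maps no norm-attaining extreme point to an extreme point is case (v), which requires $|M_T\cap Ext(B_\mathbb{X})|\geq 8$ together with a nondegeneracy condition on the supporting functionals; any valid construction must land in that case.

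The paper's proof does exactly this. It takes four consecutive vertices $x_1,x_2,x_3,x_4$ of $B_\mathbb{X}$ (here the hypothesis $|Ext(B_\mathbb{X})|\geq 8$ is used to have four such vertices with $x_i\neq\pm x_j$) and builds $\mathbb{Y}$ with $2n$ prescribed vertices so that $B_\mathbb{X}\subseteq B_\mathbb{Y}$ and the identity map $I:\mathbb{X}\to\mathbb{Y}$ satisfies $M_I\cap Ext(B_\mathbb{X})=\{\pm x_1,\pm x_2,\pm x_3,\pm x_4\}$, with $x_1,x_2$ interior to one edge of $B_\mathbb{Y}$ (common supporting functional $f$) and $x_3,x_4$ interior to another (common functional $g$). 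Then $Ext\,J(I)$ contains $f\otimes x_1, f\otimes x_2, g\otimes x_3, g\otimes x_4$, which are linearly independent, so $I$ is $4$-smooth and hence an extreme contraction whose image avoids $Ext(B_\mathbb{Y})$. The essential idea you are missing is that two distinct norm-attaining extreme points of $B_\mathbb{X}$ must be sent into the \emph{same} edge of $B_\mathbb{Y}$ (for each of two non-antipodal edges) in order to recover the dimension lost by the images being merely smooth. Your two-point construction cannot be repaired; you would need to redo it with at least eight norm-attaining extreme points distributed two per edge.
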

\begin{proof}
	Suppose $\{x_1,x_2,x_3,x_4\}\subseteq Ext(B_\mathbb{X})$ such that $x_i\neq \pm x_j$ for all $1\leq i,j\leq 4$ and $L[x_i,x_{i+1}]$ is an edge of $B_\mathbb{X}$ for each $1\leq i\leq 3.$ Let $L[x_1,x_2[~\cap L[x_4,x_3[=\{y_1\}$ and $L[x_2,x_1[~\cap L[-x_3,-x_4[=\{y_2\}.$ Let $z_1=\frac{y_1+x_2}{2}$ and $z_{n-1}=\frac{y_1+x_3}{2}.$ Now, for each $2\leq i\leq n-2,$ we can easily choose vectors $z_i=a_iz_1+b_iz_{n-1},$ where $a_i,b_i>0$ such that the convex hull of $\{\pm y_2,\pm z_j:1\leq j\leq n-1\}$ is a symmetric convex set with $2n$ extreme points $\{\pm y_2,\pm z_j:1\leq j\leq n-1\}$. Let $\mathbb{Y}$ be a Banach space such that $B_\mathbb{Y}$ is the convex hull of $\{\pm y_2,\pm z_j:1\leq j\leq n-1\}.$ Then $|Ext(B_\mathbb{Y})|=2n.$ It is clear that $x_i~(1\leq i\leq 4)$ are smooth points of $\mathbb{Y}.$ Now, consider the operator $I:\mathbb{X}\to\mathbb{Y}$ defined by $Ix=x$ for all $x\in \mathbb{X}.$ Then $M_I\cap Ext(B_\mathbb{X})=\{\pm x_1,\pm x_2,\pm x_3,\pm x_4\}.$ Observe that, $x_1,x_2\in L[z_1,y_2]$ and $x_3,x_4\in L[-y_2,z_{n-1}].$ So there exist $f,g\in S_{\mathbb{Y}^*}$ such that $J(x_1)=J(x_2)=\{f\}$ and $J(x_3)=J(x_4)=\{g\}.$ Suppose $I$ is $k-$smooth, where $1\leq k\leq 4.$ Then 
	\begin{eqnarray*}
		k&=&\dim~span~J(I)\\
		&=& \dim~span~Ext~J(I)	\\
		&=& \dim~span~\{f\otimes x_1,f\otimes x_2,g\otimes x_3,g\otimes x_4\}\\
		&=&4.	
	\end{eqnarray*}
	Therefore, $I$ is $4-$smooth. Hence, by Theorem \ref{th-ec}, $I$ is an extreme contraction. Now, observe that if $x\in Ext(B_\mathbb{X})\setminus M_I,$ then $\|Ix\|<\|I\|=1,$ that is, $x\notin Ext(B_\mathbb{Y}).$ If $x\in M_I\cap Ext(B_\mathbb{X}),$ then $Ix$ is smooth point of $B_\mathbb{Y}.$ Therefore, $I(Ext(B_\mathbb{X}))\cap Ext(B_\mathbb{Y})=\emptyset.$ Thus, the pair $(\mathbb{X},\mathbb{Y})$ does not satisfy weak L-P property. This completes the proof of the theorem.
\end{proof}

Although the above theorem indicates that the condition stated in Theorem \ref{th-weaklp} may be  necessary for arbitrary two-dimensional polygonal Banach spaces $\mathbb{X}, \mathbb{Y}$  to satisfy weak L-P property, the answer is still not known in its full generality. 
However, if $\dim(\mathbb{X}) > 2$, then the condition is not necessary.  We  exhibit  polyhedral Banach spaces $\mathbb{X},\mathbb{Y}$  that  satisfy weak L-P property but  $\dim(\mathbb{X})=n,\dim(\mathbb{Y})=m,|Ext(B_\mathbb{X})|=2(n+p)$ and $mp\geq n+p$ hold. To do so we  need the following two lemmas.

\begin{lemma}\label{lemma-4gon}
	Let $\mathbb{X}=\ell_{\infty}^{3}$ and $\mathbb{Y}$ be a two-dimensional Banach space.  Let $T \in S_{\mathbb{L}(\mathbb{X}, \mathbb{Y})}$ be such that $Rank(T)=2$ and $Ext(B_{\mathbb{X}}) \subseteq M_T.$ Then $T(B_{\mathbb{X}})$ is a convex set with $4$ extreme points.
\end{lemma}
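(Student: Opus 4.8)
The plan is to work entirely with the image polygon $P:=T(B_\mathbb{X})$. Since $\mathbb{X}=\ell_\infty^3$ has the cube $[-1,1]^3$ as unit ball and $Rank(T)=2=\dim\mathbb{Y}$, the set $P$ is a two‑dimensional, centrally symmetric convex polygon, contained in $B_\mathbb{Y}$ (because $\|T\|=1$), whose extreme points lie among the eight vectors $\epsilon_1a+\epsilon_2b+\epsilon_3c$, $\epsilon_i=\pm1$, where $a=Te_1,b=Te_2,c=Te_3$ and $e_1,e_2,e_3$ is the standard basis of $\ell_\infty^3$. As $Rank(T)=2$, the kernel $\ker T$ is one‑dimensional; fix $0\ne w=(w_1,w_2,w_3)$ with $w_1a+w_2b+w_3c=0$. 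The argument splits according to whether all the $w_i$ are non‑zero or at least one of them vanishes.

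First I would rule out the case $w_1,w_2,w_3\ne 0$. Put $v_+:=(\operatorname{sgn}w_1,\operatorname{sgn}w_2,\operatorname{sgn}w_3)$, an extreme point of $B_\mathbb{X}$, and claim $Tv_+$ lies in the interior of $P$. Indeed, if not, there is a supporting functional $y^*\in S_{\mathbb{Y}^*}$ of $P$ at $Tv_+$; then $y^*(Tv_+)=\max_{x\in B_\mathbb{X}}(T^*y^*)(x)=\sum_i|\xi_i|$, where $\xi_i:=(T^*y^*)(e_i)=y^*(Te_i)$, and since this maximum is attained at $x=v_+$ we must have $w_i\xi_i\ge 0$ for every $i$. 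But $\sum_i w_i\xi_i=(T^*y^*)(w)=y^*(Tw)=0$, so each $w_i\xi_i=0$, hence $\xi_i=0$ for all $i$ (as $w_i\ne 0$); thus $T^*y^*=0$, which is impossible since $T$ is onto and $y^*\ne 0$. Therefore $Tv_+\in\operatorname{int}P\subseteq\operatorname{int}B_\mathbb{Y}$, giving $\|Tv_+\|<1=\|T\|$, and this contradicts the hypothesis $Ext(B_\mathbb{X})\subseteq M_T$.

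Hence some coordinate of $w$ vanishes; after relabelling the coordinates of $\ell_\infty^3$ we may assume $w_3=0$, so $w_1a+w_2b=0$ with $(w_1,w_2)\ne 0$, i.e. $a$ and $b$ are linearly dependent (and not both zero, else $Rank(T)\le 1$). Writing $B_\mathbb{X}=\bigl([-1,1]^2\times\{0\}\bigr)+\bigl(\{0\}\times[-1,1]\bigr)$ and applying the linear map $T$, one gets $P=T([-1,1]^2\times\{0\})+[-c,c]$, a Minkowski sum of two segments. Since $a,b$ span a line $\mathbb{R}d$, the first summand equals $[-\rho d,\rho d]$ for some $\rho>0$; and $d,c$ are linearly independent (in particular $c\ne 0$) because $Rank(T)=2$. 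Consequently $P=\{\mu\rho d+\nu c:\mu,\nu\in[-1,1]\}$ is a genuine parallelogram, i.e. a convex set with exactly four extreme points, which is the assertion.

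The routine ingredients are the description of $P$ as a two‑dimensional convex polygon inside $B_\mathbb{Y}$ with the stated vertex set, and the final Minkowski‑sum computation; neither uses anything beyond linear algebra. The heart of the matter, and the step I expect to be the real obstacle, is the interior claim in the second paragraph: when $\ker T$ is spanned by a vector with all coordinates non‑zero, the ``$w$‑extreme'' cube vertex $v_+$ is pushed into the interior of the shadow $T(B_\mathbb{X})$, which is incompatible with $v_+$ being a norm‑attaining vector of $T$. Identifying $v_+$ and proving this via the lifted supporting functional is what makes the whole argument work; once it is in hand, no appeal to smoothness or to Theorem \ref{th-ep} is needed.
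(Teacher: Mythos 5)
Your argument is correct, and it takes a genuinely different route from the paper's. The paper decomposes $B_{\ell_\infty^3}$ as the convex hull of the two opposite facets $\pm G_1=\pm x+F$ (with $x=(1,0,0)$ and $F$ the middle square), observes that $T(F)$ is a symmetric segment or quadrilateral, and then, writing $Tx=a(y_1+y_2)+b(y_1-y_2)$ in the quadrilateral case, runs through four sub-cases: in three of them it exhibits the four surviving extreme points by explicit convex combinations, and in the remaining one ($a\neq0$, $b\neq0$) it produces a cube vertex whose image is a proper convex combination of unit vectors, hence of norm $<1$, contradicting $Ext(B_{\mathbb{X}})\subseteq M_T$. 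You instead organize the whole proof around $\ker T$: if the kernel direction $w$ has a zero coordinate, two of the $Te_i$ are parallel and $T(B_{\mathbb{X}})$ is literally a Minkowski sum of two segments, i.e.\ a parallelogram; if all $w_i\neq0$, your supporting-functional computation ($w_i\xi_i\ge0$ together with $\sum_i w_i\xi_i=y^*(Tw)=0$ forces $T^*y^*=0$) shows the vertex $v_+=(\operatorname{sgn}w_1,\operatorname{sgn}w_2,\operatorname{sgn}w_3)$ lands in the interior of $T(B_{\mathbb{X}})$, again contradicting norm attainment. The two proofs locate the same obstruction (the hexagonal shadow is incompatible with every cube vertex being norm-attaining), but yours replaces the paper's explicit convex-combination bookkeeping with a short duality argument, identifies in advance exactly which vertex fails, and makes the dichotomy (parallelogram versus forbidden hexagon) structurally transparent; the paper's version is more computational but entirely self-contained at the level of elementary inequalities. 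One small point worth making explicit if you write this up: since $\mathrm{Rank}(T)=2=\dim\mathbb{Y}$, the set $P=T(B_{\mathbb{X}})$ has nonempty interior, so ``not interior'' does give a boundary point admitting a supporting functional, and $\mathrm{int}\,P\subseteq\mathrm{int}\,B_{\mathbb{Y}}$ gives $\|Tv_+\|<1$.
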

\begin{proof}
	Let us consider the facets $\pm G_1$ of $B_{\mathbb{X}},$ where $G_1= \{(1,y,z) : |y|,|z| \leq 1 \}.$ Now, $G_1$ can be expressed as
	$$G_1= x + F,$$ where $x=(1,0,0)$ and $F=\{(0,y,z) : |y|,|z| \leq 1 \}.$ Then $$-G_1= -x + F.$$ It is clear that $B_{\mathbb{X}}$ is the convex hull of the sets $G_1=x+F$ and $-G_1=-x+F.$ Therefore, $T(B_{\mathbb{X}})$ is the convex hull of $T(x+F)=Tx+T(F)$ and $T(-x+F)=-Tx+T(F).$ Here, $F$ is the convex hull of $\{\pm(0,1,1),\pm(0,1,-1)\},$ that is, $T(F)$  is the convex hull of $\{\pm T(0,1,1),\pm T(0,1,-1)\}.$ Hence, $T(F)$ must be a symmetric set having at most four extreme points. If $T(F)$ has two extreme points say $\pm z,$ then the extreme points of $T(B_{\mathbb{X}})$ are $\pm Tx \pm z$ and we are done.\\
	Now, let $T(F)$ be a symmetric set having exactly four distinct extreme points $\pm T(0,1,1)$ and $\pm T(0,1,-1).$ We denote $T(0,1,1)$ and $T(0,1,-1)$ by $y_1$ and $y_2$ respectively. Clearly, $T(B_\mathbb{X})$ is the convex hull of $\{\pm Tx\pm y_1,\pm Tx\pm y_2\}.$ Since $\mathbb{Y}$ is two-dimensional and $\{y_1+y_2,y_1-y_2\}$ is linearly independent, we have, $Tx=a(y_1+y_2)+b(y_1-y_2),$ where $a,b\in \mathbb{R}.$ Now, the following cases may hold:\\
	(i) $a=b=0.$ (ii) $a=0,b\neq0,$ (iii) $a\neq 0,b=0,$ (iv) $a\neq 0,b\neq 0.$\\
	We consider each case separately.\\
	(i) Let $a=b=0.$ Then $Tx=0.$ Thus, $T(B_\mathbb{X})$ is the convex hull of $\{\pm y_1,\pm y_2\}.$ So the extreme points of $T(B_\mathbb{X})$ are $\pm y_1,\pm y_2$ and we are done.\\
	(ii) Let $a=0,b\neq 0.$ Then 
	$$Tx-y_1=\frac{2b}{2b+1}(Tx-y_2)+\frac{1}{2b+1}(-Tx-y_1)$$
	and 
	$$-Tx-y_2=\frac{1}{2b+1}(Tx-y_2)+\frac{2b}{2b+1}(-Tx-y_1).$$ 
	Thus, the only extreme points of $T(B_{\mathbb{X}})$ are $\pm (Tx-y_2)$ and $\pm (Tx+y_1).$ Hence, we are done.\\ 
	(iii) Let $a\neq0,b= 0.$ Then 
	$$Tx-y_1= \frac{2a}{2a+1}(Tx+y_2)+\frac{1}{2a+1}(-Tx-y_1)$$ and 
	$$-Tx+y_2=\frac{1}{2a+1}(Tx+y_2)+\frac{2a}{2a+1}(-Tx-y_1).$$
	Thus, the only extreme points of $T(B_{\mathbb{X}})$ are $\pm (Tx+y_2)$ and $\pm (Tx+y_1).$ Hence, we are done.\\
	(iv) Let $a\neq 0,b\neq 0.$  First let $a>0,b>0.$ Then 
	$$Tx-y_1=\frac{2a}{2a+2b+1}(Tx+y_2)+\frac{2b}{2a+2b+1}(Tx-y_2)+\frac{1}{2a+2b+1}(-Tx-y_1).$$
	Since $a>0,b>0,$ we have, $\frac{2a}{2a+2b+1},\frac{2b}{2a+2b+1},\frac{1}{2a+2b+1}\in (0,1).$ Moreover, we have, $\|Tx+y_2\|=\|Tx-y_2\|=\|Tx+y_1\|=1.$ Using this, it can be easily observed that $\|Tx-y_1\|<1,$ that is, $\|T(1,-1,-1)\|<1,$ which contradicts that $Ext(B_{\mathbb{X}}) \subseteq M_T.$ Similarly, considering $a<0,b<0,$ or $a<0,b>0$ or $a>0,b<0,$ we get a contradiction.  \\ 
	Thus, considering all cases, we get that $T(B_\mathbb{X})$ is a convex set with $4$ extreme points. This completes the proof of the lemma.  
	
\end{proof}

\begin{lemma}\label{lemma-04gon}
	Let $\mathbb{X}=\ell_{\infty}^{4}$ and $\mathbb{Y}$ be a two-dimensional Banach space.  Let $T \in S_{\mathbb{L}(\mathbb{X}, \mathbb{Y})}$ be such that $Rank(T)=2$ and $Ext(B_{\mathbb{X}}) \subseteq M_T.$ Then $T(B_{\mathbb{X}})$ is a convex set with $4$ extreme points.
\end{lemma}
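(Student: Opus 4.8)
The plan is to use that $B_{\ell_{\infty}^{4}}$ is a Minkowski sum of segments, so that its image under $T$ is a planar zonotope, and then to show that the norm‑attainment hypothesis forces this zonotope to degenerate to a parallelogram. Write $v_i=Te_i\in\mathbb{Y}$ for the standard basis vectors $e_1,\dots,e_4$ of $\ell_{\infty}^{4}$. Since $B_{\ell_{\infty}^{4}}=\sum_{i=1}^{4}[-e_i,e_i]$, linearity of $T$ gives $T(B_{\ell_{\infty}^{4}})=\sum_{i=1}^{4}[-v_i,v_i]=:Z$, a centrally symmetric convex polygon whose extreme points lie among the $16$ points $\sum_{i}\epsilon_i v_i$, $\epsilon\in\{-1,1\}^{4}$. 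Since $Rank(T)=2=\dim\mathbb{Y}$, the set $Z$ spans $\mathbb{Y}$, so $Z$ is two‑dimensional. I will show that $v_1,\dots,v_4$ lie on exactly two distinct lines through the origin; granting this, grouping the indices according to these two lines writes $Z=S_1+S_2$ with $S_1,S_2$ nondegenerate segments along the two lines, so $Z$ is a genuine parallelogram and hence has exactly $4$ extreme points.

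That at least two lines occur is clear: otherwise all nonzero $v_i$ are collinear and $Rank(T)\le 1$. To exclude three or more lines, suppose (after permuting coordinates, which is an isometry of $\ell_{\infty}^{4}$) that $v_1,v_2,v_3$ are pairwise non‑parallel. They span $\mathbb{Y}$ and admit a relation $a_1v_1+a_2v_2+a_3v_3=0$ with all $a_k\ne 0$. Set $\sigma_k=\mathrm{sign}(a_k)$ for $k=1,2,3$ and $\sigma_4=1$, and let $w=\sigma_1v_1+\sigma_2v_2+\sigma_3v_3$. I claim $w$ lies in the interior of $Z'=\sum_{k=1}^{3}[-v_k,v_k]$. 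For any nonzero $f\in\mathbb{Y}^{*}$ one has $\sup_{z\in Z'}f(z)=\sum_{k=1}^{3}|f(v_k)|$, and if $f(w)$ equalled this value then $\sigma_k f(v_k)=|f(v_k)|\ge 0$ for each $k$; multiplying by $|a_k|>0$ and summing gives $0=\sum_k a_k f(v_k)=\sum_k|a_k|\,\sigma_k f(v_k)$, a sum of non‑negative terms, which forces $f(v_k)=0$ for all $k$ and hence $f=0$ since the $v_k$ span $\mathbb{Y}$. Thus $f(w)<\sup_{z\in Z'}f(z)$ for every $f\ne 0$, i.e. $w\in\mathrm{int}(Z')$. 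Since $\mathrm{int}(Z')+\{v_4\}\subseteq\mathrm{int}\big(Z'+[-v_4,v_4]\big)=\mathrm{int}(Z)$ and $Z=T(B_{\ell_{\infty}^{4}})\subseteq B_{\mathbb{Y}}$, the point $\sum_{i=1}^{4}\sigma_i v_i=w+v_4$ lies in $\mathrm{int}(Z)\subseteq\mathrm{int}(B_{\mathbb{Y}})$, so $\big\|\sum_{i}\sigma_i v_i\big\|<1=\|T\|$. But $(\sigma_1,\sigma_2,\sigma_3,\sigma_4)\in Ext(B_{\ell_{\infty}^{4}})$, so this contradicts $Ext(B_{\ell_{\infty}^{4}})\subseteq M_T$. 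Hence at most two lines occur, and therefore exactly two, which finishes the argument as explained above.

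An alternative, more in the spirit of Lemma \ref{lemma-4gon}, would be to use the facet decomposition $B_{\ell_{\infty}^{4}}=\mathrm{conv}\big((e_1+F)\cup(-e_1+F)\big)$ with $F\cong B_{\ell_{\infty}^{3}}$, giving $T(B_{\ell_{\infty}^{4}})=T(F)+[-Te_1,Te_1]$, and then to analyse $T(F)$ via Lemma \ref{lemma-4gon}; however the restriction $T|_F$ need not have rank $2$ nor attain its norm at every extreme point of $B_{\ell_{\infty}^{3}}$, so this route requires splitting into cases and is messier than the direct zonotope computation above. The step I expect to require the most care is the "interior point" claim — producing, from a triple of pairwise non‑parallel $v_i$'s, a sign pattern $\sigma$ for which $\sum_i\sigma_i v_i$ is strictly interior to $Z$ — since this is precisely where the combinatorics of planar zonotopes (three or more generating directions forces some vertex of the $4$‑cube to be pushed strictly inside the image) enters and produces the contradiction with norm attainment.
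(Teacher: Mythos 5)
Your proof is correct, and it takes a genuinely different route from the paper. The paper decomposes $B_{\ell_\infty^4}$ into the two parallel facets $\pm x+F$ (with $F$ a three-dimensional cube), shows that $T(F)$ is a symmetric planar set with at most six extreme points, rules out six by an interior-point/norm-attainment contradiction, and then, when $T(F)$ has four extreme points, repeats the case analysis of Lemma \ref{lemma-4gon} (writing $Tx=a(y_1+y_2)+b(y_1-y_2)$ and checking the sign patterns of $a,b$ one by one). You instead treat $T(B_{\ell_\infty^4})$ directly as the zonotope $\sum_{i=1}^4[-Te_i,Te_i]$ and show, via the support-function identity $\sup_{Z'}f=\sum_k|f(v_k)|$ together with a linear relation among any three pairwise non-parallel generators, that three or more generating directions would push some vertex image strictly into the interior of $T(B_\mathbb{X})$, contradicting $Ext(B_\mathbb{X})\subseteq M_T$; two directions then force a parallelogram. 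The underlying obstruction (norm attainment at every vertex of the cube excludes interior images) is the same in both arguments, but your version replaces the two-stage facet reduction and the explicit convex-combination computations by a single symmetric argument, and it applies verbatim to $\ell_\infty^n$ for every $n$, which the paper only asserts in a remark after the lemma. One step you leave implicit but should state is that a nontrivial dependence $a_1v_1+a_2v_2+a_3v_3=0$ among nonzero pairwise non-parallel vectors in a two-dimensional space necessarily has all $a_k\neq 0$ (if some $a_k=0$ the remaining two vectors would be parallel or one would vanish); this is a one-line check and does not affect the validity of the argument.
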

\begin{proof}
	Let us consider the facets $\pm G_1$ of $B_{\mathbb{X}},$ where $G_1= \{(1,y,z,w) : |y|,|z|,|w| \leq 1 \}.$ Now, $G_1$ can be expressed as
	$$G_1= x + F,$$ where $x=(1,0,0,0)$ and $F=\{(0,y,z,w) : |y|,|z|,|w| \leq 1 \}.$ Then $$-G_1= -x + F.$$ Observe that, there exist $x_i\in F(1\leq i\leq 4)$ such that $\pm x+x_i\in Ext(B_\mathbb{X})$ and $F$ is the convex hull of $\{\pm x_i:1\leq i\leq 4\}.$ It is clear that $B_{\mathbb{X}}$ is the convex hull of the sets $G_1=x+F$ and $-G_1=-x+F.$ Therefore, $T(B_{\mathbb{X}})$ is the convex hull of $T(x+F)=Tx+T(F)$ and $T(-x+F)=-Tx+T(F).$ Here, $F$ is a symmetric cube about the origin. Similarly as in Lemma \ref{lemma-4gon}, it can be shown that $T(F)$ must be a symmetric set having at most six extreme points. If $T(F)$ has two extreme points say $\pm z,$ then the extreme points of $T(B_{\mathbb{X}})$ are $\pm Tx \pm z$ and we are done.\\
	Let $T(F)$ be a symmetric set having exactly six extreme points. Suppose the extreme points of $T(F)$ are $\pm Tx_1=\pm y_1, \pm Tx_2=\pm y_2$ and $\pm Tx_3=\pm y_3.$ Now, proceeding similarly as in  Lemma \ref{lemma-4gon}, it can be shown that $\pm Tx_4=\pm y_4$ are interior points of $T(F).$ Therefore, $Tx + Tx_4=Tx+y_4$ is also an interior point of $Tx+T(F)$ and hence $||Tx+y_4|| < 1.$ But $Tx+y_4=T(x+x_4),$ where $x+x_4 \in Ext(B_{\mathbb{X}}),$ which contradicts that $Ext(B_{\mathbb{X}}) \subseteq M_T.$ Thus, $T(F)$ can not have six extreme points.\\
	Suppose $T(F)$ has exactly four extreme points. Now, proceeding similarly as Lemma \ref{lemma-4gon}, we can show that $T(B_{\mathbb{X}})$ has exactly four extreme points. This completes the proof of the lemma.
\end{proof}

\begin{remark}
	Following the same line of arguments we can show that  Lemma \ref{lemma-04gon} holds for $\mathbb{X} = \ell_{\infty}^n,$ i.e., if  $T \in S_{\mathbb{L}(\ell_{\infty}^n, \mathbb{Y})}$ with  $Rank(T)=2$ and $Ext(B_{\mathbb{X}}) \subseteq M_T,$ then $T(B_{\mathbb{X}})$ is a convex set with $4$ extreme points.
\end{remark}

Next, we obtain a bound of the order of smoothness of a class of bounded linear operators defined between $\ell_{\infty}^4$ and a two-dimensional Banach space. 
\begin{theorem}\label{th-ksmooth}
	Let $\mathbb{X}=\ell_{\infty}^{4}$ and $\mathbb{Y}$ be any two-dimensional Banach space. Suppose $T \in S_{{\mathbb{L}(\mathbb{X},\mathbb{Y})}}$ is such that $Ext(B_{\mathbb{X}}) \subseteq M_T$ and $Tx$ is smooth for all $x \in Ext(B_{\mathbb{X}}).$ Then $T$ is $k-$smooth where $k \leq 6$.
\end{theorem}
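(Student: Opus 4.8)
The plan is to reduce the statement, via Lemma~\ref{lemma-wojcik}, Lemma~\ref{lemma-04gon} and Theorem~\ref{th-ep}, to a linear-algebra computation of $\dim\,\mathrm{span}\,Ext\,J(T)$ inside $\mathbb{L}(\mathbb{X},\mathbb{Y})^*$. First I would identify $\mathbb{L}(\mathbb{X},\mathbb{Y})$ with $\mathbb{Y}^4$ via $S\mapsto(Se_1,\dots,Se_4)$, so that its dual is $(\mathbb{Y}^*)^4$ and, for an extreme point $v=(\epsilon_1,\dots,\epsilon_4)$ of $B_\mathbb{X}$ and $y^*\in\mathbb{Y}^*$, we have $y^*\otimes v=(\epsilon_1y^*,\epsilon_2y^*,\epsilon_3y^*,\epsilon_4y^*)$. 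Since each $Tv$ with $v\in Ext(B_\mathbb{X})$ is a smooth point of $S_\mathbb{Y}$, $J(Tv)$ is a singleton $\{y_v^*\}$, so by Lemma~\ref{lemma-wojcik}, $Ext\,J(T)=\{y_v^*\otimes v:v\in Ext(B_\mathbb{X})\}$, which has at most $8$ elements because $y_{-v}^*\otimes(-v)=y_v^*\otimes v$. Writing $W:=\mathrm{span}\,Ext\,J(T)$, the claim is exactly $k=\dim W\le6$. If $\mathrm{rank}(T)=1$, all $Tv$ lie on one line, so $Tv=\pm u$ for a single smooth $u\in S_\mathbb{Y}$ with supporting functional $u^*$; then every $y_v^*\in\mathbb{R}u^*$, so $W\subseteq u^*\otimes\mathbb{R}^4$ and $\dim W\le4$. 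Hence I may assume $\mathrm{rank}(T)=2$.

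By Lemma~\ref{lemma-04gon}, $P:=T(B_\mathbb{X})$ is then a parallelogram; write $P=\{y\in\mathbb{Y}:|g_1(y)|\le1,\ |g_2(y)|\le1\}$ with $g_1,g_2$ linearly independent, with vertices $\pm p,\pm q$ where $g_1(p)=g_1(q)=1$ and $g_2(p)=1=-g_2(q)$, and edges $\pm E_1=\pm L[p,q]$ (where $g_1=\pm1$) and $\pm E_2$ (where $g_2=\pm1$). For every $v\in Ext(B_\mathbb{X})$ we have $Tv\in P\cap S_\mathbb{Y}\subseteq\partial P$, so the functionals $h_j:=g_j\circ T\in\ell_1^4$ satisfy $\|h_j\|_1=1$ and $\max\{|h_1(v)|,|h_2(v)|\}=1$ for all extreme $v$. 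Since $\{v\in Ext(B_\mathbb{X}):|h_j(v)|=1\}$ has exactly $2^{\,5-|\mathrm{supp}\,h_j|}$ elements and these two sets must together cover all $16$ extreme points of $B_\mathbb{X}$, a short count leaves only two possibilities: \textbf{(I)} one of $h_1,h_2$ equals $\pm e_i^*$ for some $i$; or \textbf{(II)} $h_1$ and $h_2$ have the same two-element support, say $\{1,2\}$, forcing $g_1(Te_j)=g_2(Te_j)=0$ and hence $Te_j=0$ for $j=3,4$, so that $T$ factors through the first two coordinates and every $Tv$ equals a vertex of $P$.

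In case \textbf{(II)} (and in the degenerate subcase of (I) below) I would argue as follows. With $Te_3=Te_4=0$ we have $Tv=\epsilon_1Te_1+\epsilon_2Te_2\in\{\pm(Te_1+Te_2),\pm(Te_1-Te_2)\}=\{\pm p,\pm q\}$; each of $p,q$, being an extreme point of $P=\mathrm{conv}\{Tv:v\in Ext(B_\mathbb{X})\}$, equals $Tv'$ for some extreme $v'$, hence is a smooth point of $S_\mathbb{Y}$ with unique supporting functional $y_p^*$, resp.\ $y_q^*$. Taking the $8$ representatives $v=(1,\epsilon_2,\epsilon_3,\epsilon_4)$ we get $y_v^*=y_p^*$ when $\epsilon_2=1$ and $y_v^*=y_q^*$ when $\epsilon_2=-1$, so $W=(y_p^*\otimes V_p)+(y_q^*\otimes V_q)$ with $V_p=\mathrm{span}\{(1,1,\epsilon_3,\epsilon_4):\epsilon_3,\epsilon_4=\pm1\}$ and $V_q=\mathrm{span}\{(1,-1,\epsilon_3,\epsilon_4):\epsilon_3,\epsilon_4=\pm1\}$, both $3$-dimensional. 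If $y_p^*,y_q^*$ are linearly independent they form a basis of $\mathbb{Y}^*$ and $(y_p^*\otimes V_p)\cap(y_q^*\otimes V_q)=\{0\}$, so $\dim W=6$; if they are proportional, then $W\subseteq y_q^*\otimes\mathbb{R}^4$ and $\dim W\le4$. Either way $k\le6$.

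It remains to treat case \textbf{(I)}. After relabeling coordinates and, if needed, replacing $e_1$ by $-e_1$ (an isometry of $\ell_\infty^4$) and $g_1$ by $-g_1$, I may assume $g_1\circ T=e_1^*$, so $g_1(Tv)=\epsilon_1$ and every $Tv$ lies on $E_1\cup(-E_1)$. Since $\ker g_1=\mathbb{R}(p-q)$ this gives $Te_j=\mu_j(p-q)$ for $j=2,3,4$ and $Te_1=\tfrac12(p+q)+\gamma(p-q)$ for scalars $\mu_j,\gamma$, so for $\epsilon_1=1$ we have $Tv=\tfrac12(p+q)+\lambda_v(p-q)$ with $\lambda_v=\gamma+\epsilon_2\mu_2+\epsilon_3\mu_3+\epsilon_4\mu_4\in[-\tfrac12,\tfrac12]$, and $Tv$ equals $p$, $q$, or a relative interior point of $E_1$ according as $\lambda_v=\tfrac12$, $-\tfrac12$, or $\lambda_v\in(-\tfrac12,\tfrac12)$. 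If some extreme $v$ has $\lambda_v\in(-\tfrac12,\tfrac12)$, then the unit supporting functional $y_v^*$ of $B_\mathbb{Y}$ at $Tv$ attains its maximum over $P$ on a face containing a relative interior point of $E_1$, hence on all of $E_1$, so $y_v^*(p)=y_v^*(q)=1$; as $\{p,q\}$ is a basis of $\mathbb{Y}$ (the centre $0$ of $P$ is not on the line $L[p,q]$) and $g_1(p)=g_1(q)=1$, we get $y_v^*=g_1$, so $g_1$ is a unit functional supporting $B_\mathbb{Y}$ at the smooth points $p,q$, forcing $y_p^*=g_1=y_q^*$; consequently $y_v^*=g_1$ for \emph{every} representative $v=(1,\epsilon_2,\epsilon_3,\epsilon_4)$, and $W=g_1\otimes\mathrm{span}\{(1,\epsilon_2,\epsilon_3,\epsilon_4)\}=g_1\otimes\mathbb{R}^4$ is $4$-dimensional. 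Otherwise all eight numbers $\gamma+\epsilon_2\mu_2+\epsilon_3\mu_3+\epsilon_4\mu_4$ equal $\pm\tfrac12$; since the symmetric set $\{\epsilon_2\mu_2+\epsilon_3\mu_3+\epsilon_4\mu_4:\epsilon_j=\pm1\}$ can have at most two elements only if at least two of $\mu_2,\mu_3,\mu_4$ vanish, and $\mathrm{rank}(T)=2$ forbids all three from vanishing, exactly one is nonzero — which after relabeling is precisely case \textbf{(II)}, already settled. Thus in every case $k=\dim W\le6$. The two steps carrying the real content are the counting argument producing the dichotomy (I)/(II), and the geometric observation that a single extreme point landing in the relative interior of an edge of $P$ pins its supporting functional down to $g_1$ and thereby collapses $y_p^*$ and $y_q^*$ onto $g_1$; the remaining verifications ($\{p,q\}$ being a basis, the spanning claims for the listed vectors, the symmetric-sum remark) are routine.
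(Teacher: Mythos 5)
Your proposal is correct, and at its core it follows the same route as the paper: reduce to $\dim\,\mathrm{span}\,Ext\,J(T)$ via Lemma \ref{lemma-wojcik}, dispose of the rank-one case by a $4$-dimensional span, and in the rank-two case invoke Lemma \ref{lemma-04gon} to see that $T(B_{\mathbb{X}})$ is a parallelogram, after which the bound comes from the dichotomy ``some image lies in the relative interior of an edge (all supporting functionals collapse to one, giving $k\le 4$)'' versus ``all images are vertices (two functionals, each tensored with an at most $3$-dimensional space of extreme points, giving $k\le 6$).'' Where you genuinely diverge is in how you reach that dichotomy and the final count: you coordinatize, introduce $h_j=g_j\circ T\in\ell_1^4$, and use the covering count $|\{v:|h_j(v)|=1\}|=2^{5-|\mathrm{supp}\,h_j|}$ to force either $h_j=\pm e_i^*$ or a common two-element support, whereas the paper splits directly on whether some $Tx_i$ lies in an open edge of $T(B_{\mathbb{X}})$ and then bounds $n_1+n_2\le 6$ by the observation that any four preimages of a single vertex would be a basis forcing $\mathrm{rank}(T)=1$. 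Your version buys more explicit information (e.g.\ the identification of the $3$-dimensional coefficient spaces $V_p,V_q$ and the exact value $k=6$ when $y_p^*,y_q^*$ are independent, and the classification of which $Te_j$ must vanish), at the cost of the extra support-counting layer and the symmetric-sum argument needed to fold the residual subcase of (I) back into (II); the paper's geometric split is shorter but yields only the upper bound. I checked the two steps you flag as carrying the content — the covering count and the collapse of $y_p^*,y_q^*$ onto $g_1$ when an image is interior to $E_1$ — and both are sound (for the latter, one should note explicitly that the face of $P$ on which $y_v^*$ attains its maximum cannot be all of $P$ by symmetry, so it must be $E_1$).
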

\begin{proof}
Let us write $Ext(B_{\mathbb{X}})=\{\pm x_1, \pm x_2, \dots, \pm x_8 \},$ where $\{x_1, x_2, x_3, x_4 \}$ is linearly independent. Let $S= \{x_1, x_2, \dots, x_8 \}.$\\
First suppose $Rank(T)=1.$ Then there exists $y^* \in S_{\mathbb{Y}^*}$ such that for any $i \in \{1,2,\dots, 8 \},$ $J(Tx_i)= \{y^* \}$ or $\{-y^* \}.$  Now, if $T$ is $k-$smooth, then 
\begin{eqnarray*}
	k&=& dim ~span ~J(T)\\
	&=& dim~ span~ Ext ~J(T)\\
	&=& dim~ span ~\{y^*\otimes x_i : 1 \leq i \leq 8 \}\\
	&=& dim~ span ~\{y^*\otimes x_i : 1 \leq i \leq 4 \}\\
	&=& 4,
\end{eqnarray*}
as $\{y^*\otimes x_i : 1 \leq i \leq 4 \}$ is linearly independent by \cite[Lemma 2.1]{MP}. Hence $T$ is $4-$smooth.\\
 Let $Rank(T)=2.$ Then by Lemma \ref{lemma-04gon}, $T(B_\mathbb{X})$ is a convex set with four extreme points. Without loss of generality, let $\pm Tx_1,\pm Tx_2$ be four distinct extreme points of $T(B_\mathbb{X}).$ Suppose $Tx_i\in L(Tx_1,Tx_2)$ for some $3\leq i\leq 8.$ We claim that for each $3\leq i\leq 8, Tx_i\in L[Tx_1,Tx_2]\cup L[-Tx_1,-Tx_2].$ Since $\|Tx_i\|=1,L[Tx_1,Tx_2]\subseteq S_\mathbb{Y}.$ Let $J(Tx_i)=\{y^*\}.$ Then for each $y\in L[Tx_1,Tx_2],$ $y^*(y)=1.$ If possible, let there exist $3\leq j(\neq i)\leq 8$ such that $Tx_j\in L(Tx_1,-Tx_2).$ Let $J(Tx_j)=\{z^*\}.$ Then $y^*\neq \pm z^*.$ Now, for all $y\in L[Tx_1,-Tx_2],z^*(y)=1.$ Thus, $y^*,z^*\in J(Tx_1),$ contradicts that $Tx_1$ is smooth. Therefore, $Tx_j\notin L(Tx_1,-Tx_2).$ Similarly, it can be shown that $Tx_j\notin L(-Tx_1,Tx_2).$  Therefore, for any $i \in \{1,2,\dots, 8 \},$ $J(Tx_i)= \{y^* \}$ or $\{-y^* \}.$ Now, it is easy to observe that $T$ is $4-$smooth.\\
Now, suppose that $Tx_i\notin L(\pm Tx_1,\pm Tx_2)$ for any $3\leq i\leq 8.$ Then $Tx_i\in \{\pm Tx_1,\pm Tx_2\}$ for all $3\leq i\leq 8.$ Let $J(Tx_1)=\{y_1^*\}$ and $J(Tx_2)=\{y_2^*\}.$ Then for any $i \in \{1,2,\dots, 8 \},$
\begin{eqnarray*}
	&J(Tx_i)=& \{y_1^* \} ~or~ \{-y_1^* \} ~or~ ~\{y_2^* \}~ or~ \{-y_2^* \}.
\end{eqnarray*}
 Thus, there exist two subsets $S_1$ and $S_2 (S_1 \cap S_2 \neq \phi, S_1 \cup S_2 = S)$ of $S$ such that $T(S_1)= \pm Tx_1$ and $T(S_2)= \pm Tx_2.$ Therefore, we have for any $i \in \{1,2,\dots, 8 \},$
\begin{eqnarray*}
	J(Tx_i)&=& \{y_1^* \} ~ or ~ \{-y_1^* \}, ~\text{if~}x_i \in S_1\\
	&=& \{y_2^* \} ~ or ~ \{-y_2^* \},~\text{if~} x_i \in S_2.
\end{eqnarray*}
Now, it is clear that any $4$ elements of $S_1$ as well as $S_2$ are linearly dependent. Otherwise, if $\{ x_{11}, x_{12}, x_{13}, x_{14} \}$ is a linearly independent subset of $S_1,$ then for any $x \in \mathbb{X},$
\begin{eqnarray*}
	&x& =       \sum_{i=1}^{4} \lambda_{i}x_{1i}, ~ \textit{where} ~ \lambda_{i} ~\textit{are scalars,} \\
	&\Rightarrow& Tx= \sum_{i=1}^{4} \lambda_{i}Tx_{1i}\in span\{Tx_1\}.
\end{eqnarray*}
 Hence, $Rank(T)=1,$ a contradiction. Thus, maximal linearly independent subsets of $S_1$ and $S_2$ contain at most $3$ elements. Let us write those linearly independent subsets of $S_1$ and $S_2$ respectively by $A_1=\{x_{1i} : 1 \leq i \leq n_{1} \}$ and $A_2=\{ x_{2i} : 1 \leq i \leq n_{2} \},$ where $n_1, n_2 \leq 3.$ Now, if $T$ is $k-$smooth, then 
\begin{eqnarray*}
	k&=& dim~span~J(T)\\
	&=& dim~span~Ext ~J(T)\\
	&=& dim~span~\{y_1^*\otimes x, y_2^*\otimes z : x \in S_1, z \in S_2 \}\\
	&=& dim~span~\{y_1^*\otimes x_{1i}, y_2^*\otimes x_{2i} : 1 \leq i \leq n_{1}, 1 \leq j \leq n_{2}  \}\\
	&\leq&n_1+n_2\leq 6.
\end{eqnarray*}
Therefore, $T$ is $k-$smooth, where $k\leq 6.$ This completes the proof of the theorem.
\end{proof}

Now, we are in a position to show that although the pair $(\ell_{\infty}^4,\mathbb{Y})$ does not satisfy the  condition given in Theorem \ref{th-weaklp}, the pair $(\ell_{\infty}^4,\mathbb{Y})$ satisfies weak L-P property, where $\mathbb{Y}$ is a two-dimensional polygonal Banach space.

\begin{theorem}
  Let $\mathbb{Y}$ be a two-dimensional polygonal Banach space. Then the pair $(\ell_{\infty}^4,\mathbb{Y})$ satisfies weak L-P property.
\end{theorem}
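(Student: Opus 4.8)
The plan is to take an arbitrary extreme contraction $T\in S_{\mathbb{L}(\ell_\infty^4,\mathbb{Y})}$ and show it maps some extreme point of $B_{\ell_\infty^4}$ to an extreme point of $B_{\mathbb{Y}}$. By Theorem \ref{th-ec}, since $\dim(\ell_\infty^4)=4$ and $\dim(\mathbb{Y})=2$, being an extreme contraction is equivalent to $T$ being $8$-smooth. First I would invoke \cite[Th. 2.2]{SRP} to get $span(M_T\cap Ext(B_{\ell_\infty^4}))=\ell_\infty^4$, so $M_T$ contains a linearly independent set of $4$ extreme points. I would then split into cases according to $Rank(T)$. If $Rank(T)=1$, the image of every unit vector lies on a single ray, so $J(Tx)$ is (up to sign) a fixed functional $y^*$ and one computes as in Theorem \ref{th-ksmooth} that $T$ is only $4$-smooth, contradicting $8$-smoothness; hence this case cannot occur for an extreme contraction. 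The case $Rank(T)=0$ is vacuous. So the real work is $Rank(T)=2$.

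For $Rank(T)=2$, I would first argue that in fact $Ext(B_{\ell_\infty^4})\subseteq M_T$: if some extreme point $x$ had $\|Tx\|<1$, then $M_T\cap Ext(B_{\ell_\infty^4})$ omits a vertex and its antipode, and one should check (using the polyhedral structure of $\ell_\infty^4$ and the characterization results for $k$-smoothness of operators on $\ell_\infty^n$ — analogues of the $\ell_\infty^3$ and $\ell_\infty^4$ lemmas, together with \cite[Lemma 2.1]{MP}) that $T$ cannot then be $8$-smooth. Alternatively, and more directly, suppose toward a contradiction that $T(Ext(B_{\ell_\infty^4}))\cap Ext(B_{\mathbb{Y}})=\emptyset$. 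Since $M_T\cap Ext(B_{\ell_\infty^4})$ consists of the vertices at which $T$ attains its norm, and $\mathbb{Y}$ is polygonal, each such $Tx_i$ lies in the relative interior of an edge of $B_{\mathbb{Y}}$ (it is norm-one but not extreme), hence is a smooth point of $B_{\mathbb{Y}}$. Now restrict $T$ to the face structure: the key is that $Ext(B_{\ell_\infty^4})\subseteq M_T$ will follow because if $T$ attains its norm only on a proper subset of vertices whose span is still all of $\ell_\infty^4$, one can still run the smoothness count, but in any case the hypotheses of Theorem \ref{th-ksmooth} are met once we know all normed vertices have smooth image.

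The crux is then Theorem \ref{th-ksmooth}: once we are in the situation $Rank(T)=2$, $Ext(B_{\ell_\infty^4})\subseteq M_T$, and $Tx$ smooth for every $x\in Ext(B_{\ell_\infty^4})$ (which is exactly the situation forced by assuming the weak L-P property fails, together with Lemma \ref{lemma-04gon}), Theorem \ref{th-ksmooth} gives that $T$ is $k$-smooth with $k\le 6<8$. This contradicts the fact that $T$, being an extreme contraction, must be $8$-smooth by Theorem \ref{th-ec}. Hence no such $T$ exists, i.e.\ every extreme contraction sends some extreme point of $B_{\ell_\infty^4}$ to an extreme point of $B_{\mathbb{Y}}$, which is precisely the weak L-P property for the pair $(\ell_\infty^4,\mathbb{Y})$.

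I expect the main obstacle to be justifying that the failure of weak L-P property forces $Ext(B_{\ell_\infty^4})\subseteq M_T$ (so that Lemma \ref{lemma-04gon} and Theorem \ref{th-ksmooth} apply cleanly): one must rule out the possibility that $T$ attains its norm on a spanning but proper set of vertices whose images are non-extreme, and show that even then $T$ fails to be $8$-smooth. This should be handled by noting that if $x\in Ext(B_{\ell_\infty^4})\setminus M_T$ then $x$ contributes nothing to $Ext\,J(T)$ via Lemma \ref{lemma-wojcik}, so the smoothness order is computed only from the normed vertices, and the same dimension bound $k\le n_1+n_2\le 6$ from the proof of Theorem \ref{th-ksmooth} still applies (indeed it can only decrease). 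The remaining cases ($Rank(T)\le 1$) are disposed of quickly as above.
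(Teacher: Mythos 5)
Your proposal follows essentially the same route as the paper: assume the weak L-P property fails for an extreme contraction $T$, note that every norm-attaining vertex then has a smooth image, invoke Theorem \ref{th-ksmooth} when all $16$ vertices of $B_{\ell_{\infty}^4}$ lie in $M_T$, and otherwise count extreme functionals via Lemma \ref{lemma-wojcik} to contradict the $8$-smoothness forced by Theorem \ref{th-ec}. The one imprecision is in the case where $M_T$ omits some vertices: you cannot literally reuse the bound $k\leq n_1+n_2\leq 6$ from the proof of Theorem \ref{th-ksmooth}, since its derivation rests on Lemma \ref{lemma-04gon}, whose hypothesis $Ext(B_{\ell_{\infty}^4})\subseteq M_T$ then fails (and your opening claim that $Rank(T)=2$ forces $Ext(B_{\ell_{\infty}^4})\subseteq M_T$ is not true in general). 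However, the cruder count — each of the at most $7$ antipodal pairs $\pm x_i$ in $M_T\cap Ext(B_{\ell_{\infty}^4})$ contributes a single functional $y_i^*\otimes x_i$ to $Ext~J(T)$ because $Tx_i$ is smooth, so $k\leq 7<8$ — is all that is needed there, and this is exactly how the paper disposes of that case.
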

\begin{proof}
Let $\mathbb{X}=\ell_{\infty}^{4}.$ Then  $|Ext(B_{\mathbb{X}})|=16=2(4+4).$ Observe that, comparing with Theorem \ref{th-weaklp}, here we have $m=2,n=4$ and $p=4.$ Thus, $mp < n+p$ is not satisfied. We now show that the pair $(\mathbb{X}, \mathbb{Y})$ satisfies weak L-P property. Let $T \in S_{\mathbb{L}(\mathbb{X},\mathbb{Y})}$ be an extreme contraction. First let us assume $|M_T \cap Ext(B_{\mathbb{X}})|=16.$ If $Tx \in Ext(B_{\mathbb{Y}})$ for some $x \in M_T \cap Ext(B_{\mathbb{X}}),$ then we are done. If possible, let $Tx \notin Ext(B_{\mathbb{Y}})$ for any $x \in M_T \cap Ext(B_{\mathbb{X}}).$ Then from Theorem \ref{th-ep}, we get $Tx$ is smooth for all $x \in M_T \cap Ext(B_{\mathbb{X}}).$ Now, from Theorem \ref{th-ksmooth}, we see that $T$ is $k-$smooth, where $k \leq 6.$ Hence, by Theorem \ref{th-ec}, $T$ is not an extreme contraction, a contradiction. Thus, $Tx \in Ext(B_{\mathbb{Y}})$ for some $x \in M_T \cap Ext(B_{\mathbb{X}}).$ 
Now, let $|M_T \cap Ext(B_{\mathbb{X}})| = 14$ and $Tx \notin Ext(B_{\mathbb{Y}})$ for any $x \in M_T \cap Ext(B_{\mathbb{X}}).$ Thus, $Tx$ is smooth for all $x \in M_T \cap Ext(B_{\mathbb{X}}).$ Let $M_T \cap Ext(B_{\mathbb{X}})=\{\pm x_1, \pm x_2, \dots, \pm x_7 \}$ and $J(Tx_i)= \{y_i^* \}, 1 \leq i \leq 7.$ Now, if $T$ is $k-$smooth, then 
\begin{eqnarray*}
	k&=& dim~span~J(T)\\
	&=& dim~span~Ext ~J(T)\\
	&=& dim~span~\{y_i^*\otimes x_i : 1 \leq i \leq 7 \}\\
	&\leq& 7.
\end{eqnarray*}
So, $T$ can not be $8-$smooth and hence not an extreme contraction, a contradiction. Therefore, $Tx \in Ext(B_{\mathbb{Y}})$ for some $x \in M_T \cap Ext(B_{\mathbb{X}}).$ Similarly, if $|M_T \cap Ext(B_{\mathbb{X}})| < 14,$  then we can show that $Tx \in Ext(B_{\mathbb{Y}})$ for some $x \in M_T \cap Ext(B_{\mathbb{X}}).$ Thus, the pair $(\mathbb{X}, \mathbb{Y})$ satisfies weak L-P property. This completes the proof of the theorem.
\end{proof}

As an immediate application of Theorem \ref{th-2dimec} (or Theorem \ref{th-weaklp}), we can compute the number of extreme contractions defined on $\mathbb{X},$ where $S_\mathbb{X}$ is a regular hexagon.

\begin{theorem}
	Let $\mathbb{X}$ be a two-dimensional polygonal Banach space such that $S_\mathbb{X}$ is a regular hexagon. Then $|Ext(B_{\mathbb{L}(\mathbb{X},\mathbb{X})})|=30.$
\end{theorem}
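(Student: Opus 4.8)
The plan is to use the characterization of extreme contractions in Theorem~\ref{th-2dimec} (specializing $\mathbb{X}=\mathbb{Y}$ with $S_\mathbb{X}$ a regular hexagon) and count, by cases on $|M_T\cap Ext(B_\mathbb{X})|$, the number of $T\in S_{\mathbb{L}(\mathbb{X},\mathbb{X})}$ satisfying one of the conditions $(i)$--$(v)$. Write $Ext(B_\mathbb{X})=\{\pm u_1,\pm u_2,\pm u_3\}$ where $u_1,u_2,u_3$ are the three (pairwise non-antipodal) vertices of the hexagon, say listed cyclically so that $L[u_1,u_2],L[u_2,u_3],L[u_3,-u_1]$ are consecutive edges. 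A key elementary observation about the regular hexagon is that consecutive vertices satisfy a relation like $u_2=u_1+u_3$ (after suitable normalization of the metric), and more importantly that for \emph{any} two distinct $u_i,\pm u_j$ the midpoint-type combinations landing on an edge are completely determined; this rigidity is what makes the count finite and explicit.

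First I would dispose of the cases $(iv)$ and $(v)$: since $\dim\mathbb{X}=2$, $|M_T\cap Ext(B_\mathbb{X})|\le 6<8$, so conditions $(iv)$ and $(v)$ never occur, and only $(i)$--$(iii)$ are relevant. Next, for case $(i)$ one counts operators $T$ with $M_T\cap Ext(B_\mathbb{X})=\{\pm x_1,\pm x_2\}$, $x_1,x_2\in Ext(B_\mathbb{X})$ linearly independent, $Tx_1,Tx_2\in Ext(B_\mathbb{X})$, and additionally $\|T\|=1$ with no \emph{other} extreme point of $B_\mathbb{X}$ in $M_T$. The pair $\{\pm x_1,\pm x_2\}$ can be chosen as one of $\binom{3}{2}=3$ unordered pairs of vertex-classes; then $(Tx_1,Tx_2)$ can be any ordered pair of (signed) vertices that is linearly independent, i.e. $6\cdot 4=24$ choices, but one must then subtract those assignments for which a third vertex $\pm x_3$ accidentally also lies in $M_T$ (i.e. $\|Tx_3\|=1$), since such a $T$ actually falls under the $|M_T\cap Ext|=6$ case. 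For each admissible linear map, checking whether the image of the remaining vertex has norm $1$ is a finite computation using the hexagon's vertex relations.

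Then for cases $(ii)$ and $(iii)$, with $M_T\cap Ext(B_\mathbb{X})=\{\pm x_1,\pm x_2,\pm x_3\}$ — the \emph{full} vertex set — the operator $T$ maps the hexagon onto itself in the sense that all six vertices lie in $M_T$, so $T(B_\mathbb{X})=B_\mathbb{X}$ up to the constraint $\|T\|=1$; equivalently $T$ is a norm-$1$ operator sending each $\pm u_i$ into $S_\mathbb{X}$. Using the hexagon relation $u_2=u_1+u_3$, any such $T$ is determined by $(Tu_1,Tu_3)$, and the requirement that $Tu_2=Tu_1+Tu_3$ also lie on $S_\mathbb{X}$ forces $(Tu_1,Tu_3)$ to be an ``edge-preserving'' pair; the number of such ordered pairs is the number of orientation-preserving or reversing affine symmetries of the hexagon's vertex labels, which is the dihedral-type count $12$ (the $12$ symmetries of the regular hexagon). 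Among these $12$ one then reads off, from the edge structure, which satisfy $(ii)$ (at least two of $Tx_i$ are vertices) versus $(iii)$ (exactly one $Tx_i$ is a vertex, the other two on distinct non-antipodal edges) versus those failing both — the genuine symmetries (all $Tu_i$ vertices) satisfy $(ii)$, and there should be a further family of $T$ collapsing some vertices onto edge-interiors that satisfy $(ii)$ or $(iii)$. Summing the counts from the three cases and checking there is no double-counting (the cases are distinguished by $|M_T\cap Ext(B_\mathbb{X})|\in\{4,6\}$, so they are disjoint) yields the total $30$.

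\textbf{Main obstacle.} The delicate point is the bookkeeping in case $(i)$: one must correctly identify, for a given linearly independent assignment $x_i\mapsto Tx_i$ of vertices, exactly when the third vertex-class $\pm x_3$ also attains the norm, so as to avoid counting such $T$ both here and in the $6$-element case; this requires a careful, hexagon-specific computation of $\|Tx_3\|$ in terms of the vertex relations (e.g. using $x_3=\pm x_1\mp x_2$ or similar). A secondary subtlety is verifying in case $(iii)$ that the two non-vertex images land on edges $F,G$ with $F\ne\pm G$ — again a finite check against the hexagon's edge list. Once these finite verifications are organized (most efficiently by exploiting the symmetry group of the hexagon to reduce the number of cases one actually computes), the arithmetic assembling $30$ is routine.
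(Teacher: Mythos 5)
Your skeleton matches the paper's: discard conditions $(iv)$ and $(v)$ of Theorem~\ref{th-2dimec} since $|Ext(B_\mathbb{X})|=6<8$, then count separately according to $|M_T\cap Ext(B_\mathbb{X})|\in\{4,6\}$. However, the execution of the $4$-point case contains a genuine error. You require $(Tx_1,Tx_2)$ to be a \emph{linearly independent} pair of vertices ($6\cdot 4=24$ choices); Theorem~\ref{th-2dimec}$(i)$ imposes no such condition, and for the regular hexagon linear independence of the images is exactly what cannot occur. Taking $x_1=(1,0)$, $x_2=(\frac12,\frac{\sqrt3}{2})$, $x_3=(-\frac12,\frac{\sqrt3}{2})$, any two distinct vertices satisfy $\|x-y\|\ge 1$ and any two linearly independent vertices satisfy $\|x+y\|\ge 1$. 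Hence if $M_T\cap Ext(B_\mathbb{X})=\{\pm x_1,\pm x_2\}$, the relation $x_3=x_2-x_1$ together with $x_3\notin M_T$ gives $\|Tx_2-Tx_1\|<1$ and forces $Tx_1=Tx_2$; and if $M_T\cap Ext(B_\mathbb{X})=\{\pm x_1,\pm x_3\}$, the relation $x_2=x_1+x_3$ forces $Tx_1=-Tx_3$. So all $18$ extreme contractions of this case ($6$ per unordered vertex pair) have linearly \emph{dependent} images: your starting list of $24$ pairs contains none of them, and every one of your $24$ candidates sends the third vertex to a point of norm $\ge 1$, so it either has norm $>1$ or falls into the $6$-point case. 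Your ``subtract the bad assignments'' procedure therefore returns $0$ where the correct count is $18$.

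The $6$-point case is also left unresolved: you correctly anticipate the $12$ symmetries of the hexagon but then posit ``a further family of $T$ collapsing some vertices onto edge-interiors'' satisfying $(ii)$ or $(iii)$. No such family exists, and ruling it out is the substantive work here. The paper first uses $(ii)$/$(iii)$ (or Theorem~\ref{th-weaklp}) to get some $Tx_i\in Ext(B_\mathbb{X})$, normalizes $Tx_1=x_1$, and then checks the six edges that could contain $Tx_3$, using $Tx_2=Tx_1+Tx_3$ and $\|Tx_2\|=1$ to conclude $Tx_3\in\{x_3,-x_2\}$; thus every such $T$ is one of the $12$ isometries. Without these two repairs the asserted total $18+12=30$ is not actually derived.
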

\begin{proof}
  Without loss of generality, we may assume that the vertices of $S_\mathbb{X}$ are $\pm x_1=\pm (1,0),\pm x_2=\pm (\frac{1}{2},\frac{\sqrt{3}}{2}),\pm x_3=\pm (-\frac{1}{2},\frac{\sqrt{3}}{2}).$ Let $T\in Ext(B_{\mathbb{L}(\mathbb{X},\mathbb{X})}).$ Then from Theorem \ref{th-2dimec}, we can say that either $(i)~|M_T\cap Ext(B_\mathbb{X})|=4$ or  $(ii)~|M_T\cap Ext(B_\mathbb{X})|=6.$\\
  
 $(i)$ First consider the case $|M_T\cap Ext(B_\mathbb{X})|=4.$ Let $M_T\cap Ext(B_\mathbb{X})=\{\pm x_1,\pm x_2\}.$ Then by Theorem \ref{th-2dimec}, we get $Tx_1,Tx_2\in Ext(B_\mathbb{X}).$ Observe that, if $x,y$ are two distinct extreme points of $B_\mathbb{X},$ then $\|x-y\|\geq 1.$ Now, $x_3=x_2-x_1$ and $x_3\notin M_T$ ensures that $Tx_1$ and $Tx_2$ cannot be distinct extreme points of $B_\mathbb{X}.$ Therefore, $Tx_1=Tx_2.$ Now, there are $6$ possibilities for $Tx_1.$ Hence, there are $6$ extreme contractions $T\in \mathbb{L}(\mathbb{X},\mathbb{X})$ such that $M_T\cap Ext(B_\mathbb{X})=\{\pm x_1,\pm x_2\}.$ Similarly, it can shown that there are $6$ extreme contractions $T\in \mathbb{L}(\mathbb{X},\mathbb{X})$ such that $M_T\cap Ext(B_\mathbb{X})=\{\pm x_2,\pm x_3\}.$ Now, suppose that $M_T\cap Ext(B_\mathbb{X})=\{\pm x_1,\pm x_3\}.$ Since $x_2=x_1+x_3$ and $x_2\notin M_T,$ $Tx_1\neq Tx_3.$ Observe that,  if $x,y$ are two linearly independent extreme points of $B_\mathbb{X},$ then $\|x+y\|\geq 1.$ Therefore, $Tx_1,Tx_3$ are not linearly independent. Hence, $Tx_1=-Tx_3.$ Now, there are $6$ possibilities for $Tx_1.$ Thus, there are $6$ extreme contractions $T\in \mathbb{L}(\mathbb{X},\mathbb{X})$ such that $M_T\cap Ext(B_\mathbb{X})=\{\pm x_1,\pm x_3\}.$ So we get $18$ extreme contractions $T$ such that $|M_T\cap Ext(B_\mathbb{X})|=4.$\\
  
 $(ii)$ Now, consider the case $|M_T\cap Ext(B_\mathbb{X})|=6,$ that is, $M_T\cap Ext(B_\mathbb{X})= \{\pm x_1,\pm x_2,$ $\pm x_3\}.$ We show that in this case, $T$ is an isometry. By Theorem \ref{th-2dimec} (or Theorem \ref{th-weaklp}), $Tx_i\in Ext(B_\mathbb{X})$ for some $1\leq i\leq 3.$ Without loss of generality, let $Tx_1\in Ext(B_\mathbb{X})$ and $Tx_1=x_1.$ Now, the following cases may hold:
 \begin{align*}
 (1)~Tx_3\in & L[x_1,x_2],  &(2)~Tx_3\in & L[x_2,x_3], & (3)~Tx_3\in L[x_3,-x_1],\\ (4)~Tx_3\in & L[-x_1,-x_2],  & (5)~Tx_3\in & L[-x_2,-x_3], & (6)~Tx_3\in L[-x_3,x_1].
 \end{align*}
 We consider each case separately.\\
 $(1)$ Let $Tx_3\in  L[x_1,x_2].$ Then $Tx_3=tx_1+(1-t)x_2,$ for some $t\in [0,1].$ Then $Tx_2=Tx_1+Tx_3=x_1+tx_1+(1-t)x_2=(1+t)x_1+(1-t)x_2=\Big(\frac{3}{2}+\frac{t}{2},(1-t)\frac{\sqrt{3}}{2}\Big).$ Thus, $\|Tx_2\|>1,$ a contradiction.\\
  $(2)$ Let $Tx_3\in  L[x_2,x_3].$ Then $Tx_3=tx_2+(1-t)x_3,$ for some $t\in [0,1].$ Thus, $Tx_2=tx_1+x_2=(t+\frac{1}{2},\frac{\sqrt{3}}{2}).$ Since $\|Tx_2\|=1,$ we must have $t=0,$ that is, $Tx_3=x_3.$\\
  $(3)$ Let $Tx_3\in L[x_3,-x_1]$. Then  $Tx_3=tx_3-(1-t)x_1$ for some $t\in [0,1].$ Thus,  $Tx_2=tx_2.$ Since $\|Tx_2\|=1,$ we have $t=1.$ Thus, $Tx_3=x_3.$\\
  $(4)$ Let $Tx_3\in  L[-x_1,-x_2].$ Then $Tx_3=-tx_1-(1-t)x_2$ for some $t\in [0,1].$ Thus, $Tx_2=-(1-t)x_3.$ Since $\|Tx_2\|=1,$ we have $t=0.$ Thus, $Tx_3=-x_2.$\\
  $(5)$ Let $Tx_3\in  L[-x_2,-x_3].$ Then $Tx_3=-tx_2-(1-t)x_3$ for some $t\in [0,1].$ Thus, $Tx_2=(1-t)x_1-x_3=\Big(\frac{3}{2}-t,-\frac{\sqrt{3}}{2}\Big).$ Since $\|Tx_2\|=1,$ we have $t=1.$ Thus, $Tx_3=-x_2.$\\
  $(6)$ Let $Tx_3\in L[-x_3,x_1].$ Then $Tx_3=tx_1-(1-t)x_3$ for some $t\in [0,1].$ Similarly as case $(1)$, we can show that $\|Tx_2\|>1,$ a contradiction.\\
  
   Therefore, if $Tx_1=x_1,$ then considering all possibilities for $Tx_3,$ we get that either $Tx_3=x_3$ or $Tx_3=-x_2.$ In each case, $T$ is an isometry. Clearly, an isometry is an extreme contraction. Now, it is easy to observe that there are $12$ isometries on $\mathbb{X}.$ Therefore, there are $12$ extreme contractions $T$ such that $|M_T\cap Ext(B_\mathbb{X})|=6.$  
  
 Combining $(i)$ and $(ii)$, we get total $18+12=30$ extreme contractions on $\mathbb{X}.$ 
\end{proof}

\bibliographystyle{amsplain}

\end{document}